\journalname{}
\newtheorem{assumption}{Assumption}
\begin{document}
\bibliographystyle{plainnat}	
	\title{Reduction principle for functionals of vector random fields
		\thanks{\textbf{Supplementary Materials} The codes used for simulations and examples in this article are available in the folder "Research materials" from  \url{https://sites.google.com/site/olenkoandriy/}.
	}}
	\author{Andriy Olenko \and Dareen Omari}
	
	\institute{\Letter\  A. Olenko \at
		\email{a.olenko@latrobe.edu.au}           
		\and
		D. Omari \at
		\email{omari.d@students.latrobe.edu.au}\\
		\at	
		Department of Mathematics and Statistics, La Trobe University,
		\at	
		Melbourne, VIC, 3086, Australia\\
	}

	\date{Received: 30 March 2018 / Accepted: date}
	
	\maketitle
	
	\begin{abstract}
		We prove a version of the reduction principle for functionals of vector long-range dependent random fields. The components of the fields may have different long-range dependent behaviours. The results are illustrated by an application to the first Minkowski functional of the Fisher--Snedecor random fields. Simulation studies confirm the obtained theoretical results and suggest some new problems.
		\keywords{excursion set \and long-range dependence \and first Minkowski functional\and Fisher-–Snedecor random fields\and heavy-tailed\and non-central limit theorems\and random field\and sojourn measure}
	\end{abstract}
	\section{Introduction}\label{sec1}
	Over the last four decades, a great deal of effort has been devoted to studying the geometric characteristics of excursion sets of random fields. The obtained theoretical results have been utilised in a variety of applications, including in geoscience, astrophysics,
	medical imaging and other related fields (see~\citealt{adler2009random,azais2009level}). Among numerous stochastic models, Gaussian and related  fields (such as $\chi^{2}$, $F$ and $t$ fields) are the most popular in studying excursion sets. The reason for this popularity is their simplicity and mathematical tractability.\

	In various statistical applications, the methods differ with the type of dependence between observations. The dependence properties of a random process are usually characterized using its covariance function. A stationary random process $\eta(x), \ x\in \mathbb R$, is called weakly (or short-range) dependent if its covariance decreases rapidly. More precisely, in this case the covariance function $B(x)=\textbf{Cov}(\eta(x+y),\eta(y))$ is integrable, i.e. 
	$\int_{\mathbb{R}}|B(x)|dx<\infty$. In contrast, the random process $\eta(x)$  possesses long-range (or strong) dependence  if its covariance function decays slowly. In this case, we assume that the covariance function $B(x)$ is non integrable, i.e.
	$\int_{\mathbb{R}}|B(x)|dx=\infty.$
	An alternative definition of long-range dependence is based on singular properties of the spectral density of a random process,  such as unboundedness at zero (see~\citealt{doukhan2002theory,leonenko2013tauberian}). Long-range dependent processes play a crucial role in various scientific disciplines and applied fields, including geophysics, astronomy, agriculture, engineering and economics (see~\citealt{leonenko1999limit,ivanov1989statistical,doukhan2002theory}). In the theory of stochastic processes, long-range dependent models give new types of limit theorems and parameter estimates in comparison to the weak dependent case (see, for example,~\citealt{ivanov1989statistical,worsley1994local,leonenko2013tauberian, beran2013long}).\
	
	The geometric properties of sets in $ \mathbb R^d$ can be described by $d+1$ different Minkowski functionals. 
	Minkowski functionals are important tools in integral and stochastic geometry. They are also widely used in analyzing data sets in many other fields, such as medicine~(\citealt{adler2007applications}), media, image analysis~(\citealt{zhao2010image}), physics and cosmology~(\citealt{marinucci2004testing}). Gaussian random fields are the most frequently used stochastic models in these problems as the moments of Minkowski functionals can be obtained in explicit forms (see~\citealt{tomita1990formation}).
	
	For a random field $\eta(x)$, $x\in T\subset\mathbb{R}^{d}$, the excursion set $A_{T}(a)$ is defined as 
	a set of points $x\in T$, where $\eta(x)$ exceeds some threshold $a\in\mathbb{R}$ (see~\citealt{adler2009random}), i.e. 
	$A_{T}(a)=\lbrace x\in T :\eta(x)\geq a\rbrace$.
	For a smooth random field $\eta(x)$, the excursion set $A_{T}(a)$ decomposes into a finite union of compact sets as $a\rightarrow\infty$. Numerous results for the geometry of the excursion sets of random fields can be found in \citealt{adler2009random} and \citealt{adler2010excursion}. The first Minkowski functional (or sojourn measure) of a random field represents the volume of the excursion set $A_{T}(a)$. It is natural to consider this functional if $T$ is a bounded observation window and to study its asymptotic behaviour when the window's size grows (see~\citealt{adler2010excursion,bulinski2012central}). The first Minkowski functional and its Hermite expansions were discussed in the one-dimensional case for discrete-time processes in~\citealt{doukhan2002asymptotics}. Some recent developments in multidimensional and continuous counterparts can be found in~\citealt{leonenko2014sojourn}.
		
Limit theorems are the central topic in the theory of probability. Considerable attention has been paid to study the asymptotic behaviour of sums (or integrals) of non-linear functionals of stationary Gaussian random processes and fields.
Limit theorems were derived using specific dependence structures imposed on random fields. The central limit theorem (CLT) holds under the usual normalization $n^{-d/2}$ when the summands (integrands) are weakly dependent random fields or processes.  For non-linear functionals of Gaussian random fields, the CLT was proved by~\citealt{breuer1983central}.  Furthermore,~\citealt{de1995central} obtained a generalisation for stationary Gaussian vector processes. Other CLTs were proved for functionals of Gaussian processes and fields in~\citealt{hariz2002limit} and \citealt{kratz2017central}.
\raggedbottom
 \vspace*{-.40cm}
\begin{figure}[H]
\begin{center}
	\centering
		\includegraphics[width=1\linewidth,trim={0 0 0 1.5cm},clip]{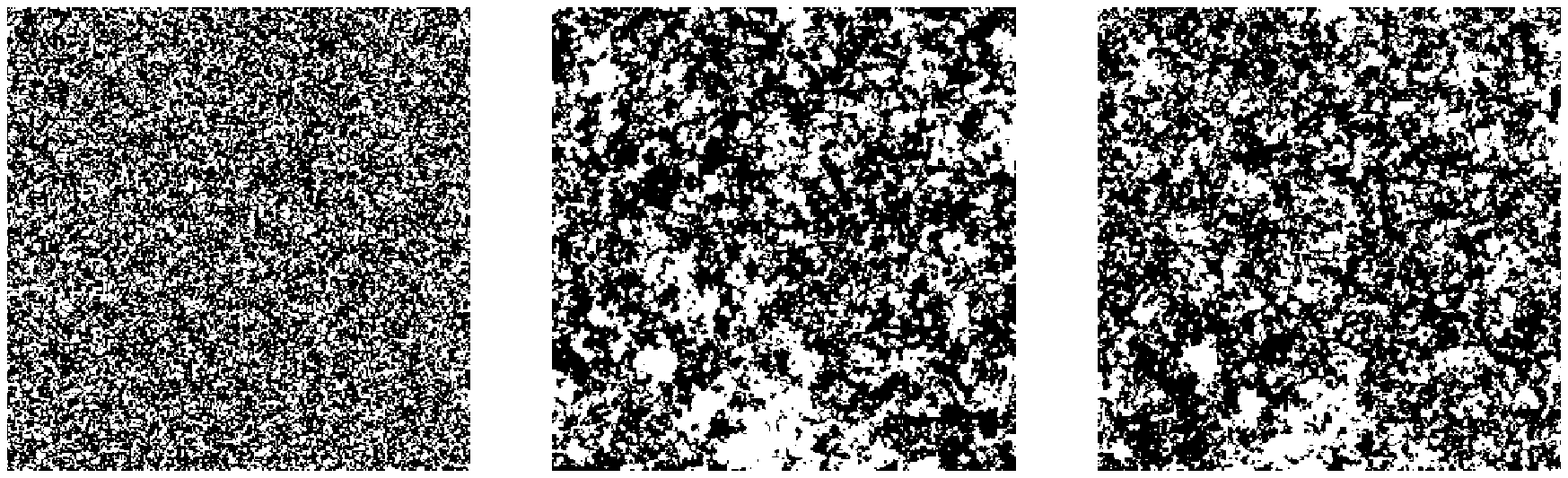}\vspace{-2.2cm} 		\hspace{-1cm}
	\includegraphics[width=1\linewidth]{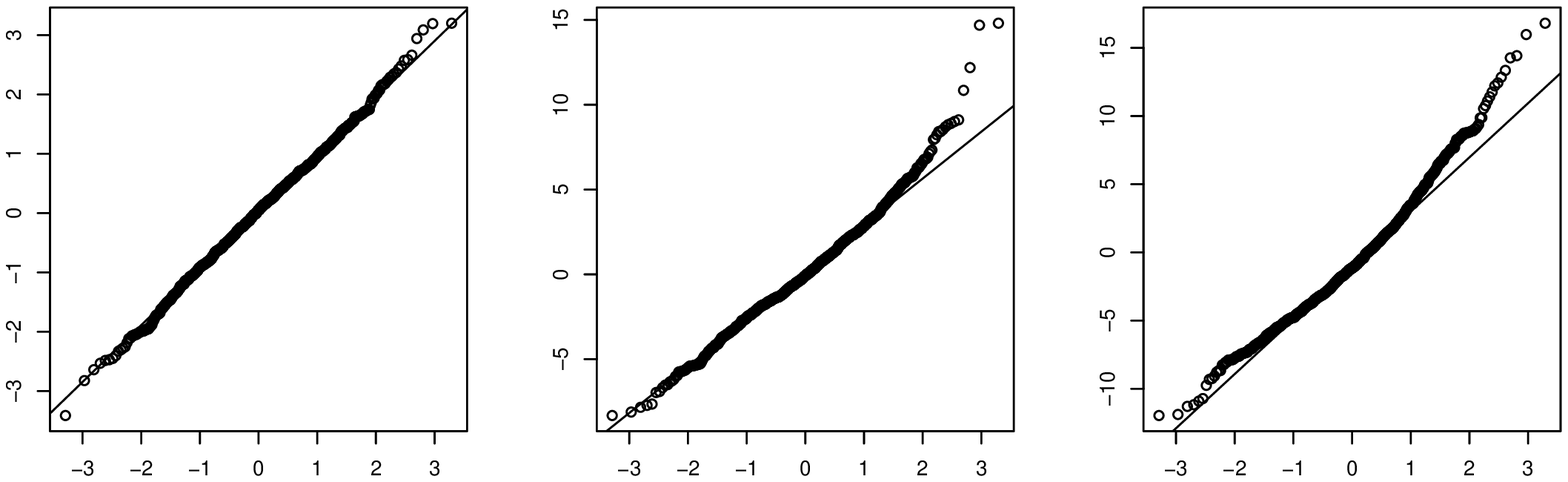} \hspace{1cm} \vspace{-1.2cm} 
	\caption{Two-dimensional excursion sets and corresponding Q–-Q plots.}  
	\label{fig:1a}\vspace{0.6cm}
		\includegraphics[width=0.4\linewidth]{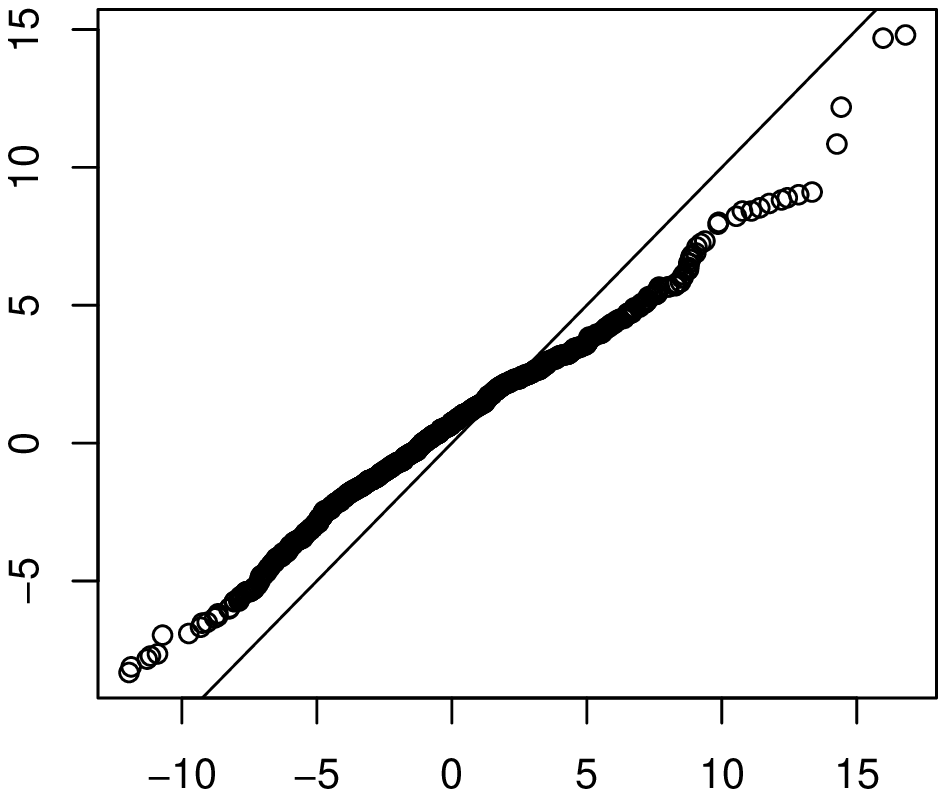} \vspace{-0.6cm} 
	\caption{Q-Q plot for second versus third models.}
	\label{fig:fi2}\vspace{-0.6cm}
\end{center}
\end{figure}
In contrast to the CLT, non-central limit theorems arise in the case of long-range dependence. In this case, one has to use different normalizing coefficients and non-Gaussian limits, which are called Hermite type distributions. It was~\citealt{rosenblatt1961independence} who first observed these situations. The earliest results in this field were obtained by~\citealt{taqqu1975weak}, where the weak limits of partial sums of Gaussian processes were studied using characteristic functions.~\citealt{dobrushin1979non} and~\citealt{taqqu1979convergence} established novel results on the weak convergence in which the limits were obtained in terms of multiple Wiener-It\^{o} integrals. The previous results were generalised for stationary zero-mean Gaussian sequences of vectors by~\citealt{arcones1994limit}. Recently, a new scenario was established by~\citealt{bai2013multivariate}. They studied the multivariate limit theorems for functionals of stationary Gaussian series under long-range dependence, short-range dependence and a mixture of both. Excellent surveys of limit theorems for long-range dependent random fields can be found in~\citealt{anh2015rate,doukhan2002theory,ivanov1989statistical,leonenko1999limit,spodarev2014limit,stoev2007limit}.

Limit theorems for Minkowski functionals of stationary and isotropic Gaussian random fields have been studied under short and long-range dependence assumptions by~\citealt{ivanov1989statistical}. The CLT was proved for broad classes of random fields under different conditions in~\citealt{bulinski2012central,demichev2015functional}. Limit theorems for sojourn measures were discussed for geometric functionals of homogeneous isotropic Gaussian random fields that exhibit long-range dependence in~\citealt{ivanov1989statistical}. Recently~\citealt{leonenko2014sojourn} investigated the limit behaviour for sojourn measures of heavy-tailed random fields (Student and Fisher-–Snedecor) that are weakly or strongly dependent. \

All these publications obtained limit theorems for functionals of vector random fields in which the components possess the same long-range dependent behaviour, see~\citealt{leonenko2014sojourn}.  However, in some applications, it may be preferable to examine cases where the components of a vector random field are different. For example,~\citealt{worsley1994local} assumed that all subjects in positron emission tomography studies can be modelled by the same Gaussian field, but in reality the subject's fields can be different. \

The first row in Figure~{\rm\ref{fig:1a}} shows two-dimensional excursion sets for realizations of three different types of random fields (from left to right):
\begin{itemize}
	\item short-range dependent normal
	scale mixture model with the covariance function  $B(\|x\|)=\mathcal{I}\cdot \mbox{exp}(-\|x\|^{2})$; 
	\item Cauchy model with components having the same long-range dependent behaviour; and
	\item Cauchy model in which the components have different long-range dependent behaviours.
\end{itemize}
More details are provided in Section~{\rm\ref{sec6}}.
The excursion sets are shown in black colour.
The Q–-Q plots in the second row correspond to the models shown above. One thousand simulations and large areas for each model were used to produce these Q-Q plots. Hence, the obtained results are rather close to asymptotic distributions. It is clear that the limit law of the first model is normal, while for the second model the data are not normally distributed. The first and the second models were discussed in detail in~\citealt{leonenko2014sojourn}.
It is also clear from Figure~{\rm\ref{fig:fi2}} and the normal Q–-Q plots in Figure~{\rm\ref{fig:1a}} that the limit law of the third model is not normal and differs from the second model. In this paper, we study the third model and investigate the asymptotic behaviour of Minkowski functionals of vector random fields with components that possess different long-range dependent  behaviours. 

The main result of the paper is the reduction principle for vector random fields with differently distributed long-range dependent components. It is demonstrated how to apply the main result to study the asymptotic behaviour of the Fisher-–Snedecor random fields with heavy-tailed marginal distributions. The obtained results are important not only in deriving asymptotic statistical inference for stochastic processes and random fields, but also because they can be used in applied probability modelling to reduce models' complexity by employing dominant components. 

The paper is organised as follows. In Section~{\rm\ref{sec2}} we outline basic notations and definitions that will be used in the subsequent sections. Section~{\rm\ref{sec3}} presents assumptions and auxiliary results. In Section~{\rm\ref{sec4}} we derive the main  result for functionals of vector long-range dependent random fields. Examples, specifications and discussions of the results of Sections~{\rm\ref{sec3}} and~{\rm\ref{sec4}} are provided in Section~5. Finally, Section~{\rm\ref{sec6}} gives some numerical results using simulations from three different models.	
	\section{Preliminaries}\label{sec2}
	In this section, we present basic notations and definitions of the random field theory, multidimensional Hermite expansions and the first Minkowski functional. Also, we introduce the main stochastic model investigated in the examples, namely, Fisher–-Snedecor random fields. In what follows, $\vert\cdot\vert$ and $\Vert\cdot\Vert$ denote the Lebesgue measure and the Euclidean distance in $\mathbb{R}^{d}$, respectively. It is assumed that all random fields are defined on the same probability space $\left(\Omega,\mathcal{F},\textbf{P}\right)$.
	
	We consider a measurable mean square continuous zero-mean homogeneous isotropic real-valued random field  $\eta (x)$, $x\in\mathbb{R}^{d}$, (see~\citealt{ivanov1989statistical,leonenko1999limit}) with the covariance function
	$$B\left(r\right)= \textbf{Cov}(\eta(x),\eta(y))=\int_{0}^{\infty}Y_{d}\left(rz\right)d\Phi\left(z\right),\quad\quad\quad x,y \in \mathbb{R}^d,$$
	where $r=\rVert x-y \lVert$, the function $Y_{d}\left(\cdot\right)$, $d\geq 1$, is defined by
	$$Y_{d}\left(z\right)=2^{(d-2)/2}\Gamma\left(\dfrac{d}{2}\right)J_{(d-2)/2}(z)z^{(2-d)/2},\quad z\geqslant 0,$$
	and $J_{\nu}(\cdot)$ is the Bessel function of the first kind of order $\nu > -1/2$. The finite measure $\Phi\left(\cdot\right)$ is called the isotropic spectral measure of the random field $\eta\left(x\right)$, $x\in\mathbb{R}^{d}$. 
	
	The random field $\eta(x)$ possesses an absolutely continuous spectrum if there exists a function $\varphi(z)$, $z\in[0,\infty)$, such that
	$$\Phi(z)=2\pi^{d/2}\Gamma^{-1}(d/2)\int_{0}^{z}u^{d-1}\varphi(u)du,\quad\quad\quad u^{d-1}\varphi(u)\in L_{1}([0,\infty)).$$
	The function $\varphi(\cdot)$ is called the isotropic spectral density of the random field $\eta\left(x\right)$.
			
	The cumulative distribution function $H(\cdot)$ and the probability density function $h(\cdot)$ of the field $\eta(x)$, are defined as follows:
	$$H(u):=\textbf{P}(\eta(x)\leq u),\quad\quad H(u)=\int_{-\infty}^{u}h(z)dz,\quad\quad u\in\mathbb{R}.$$
	A random field $\eta\left(x\right)$ with an absolutely continuous spectrum has the following isonormal spectral representation
	$$
	\eta\left(x\right)=\int_{\mathbb{R}^{d}}e^{i\langle\lambda,x \rangle}\sqrt{\varphi(\rVert\lambda\lVert)}W(d\lambda),
	$$
	where $W(\cdot)$ is the complex Gaussian white noise random measure on $\mathbb{R}^{d}$.
Let $\Delta\subset\mathbb{R}^{d}$ be a Jordan-measurable convex bounded set  with $|\Delta| > 0$, and $\Delta$ contains the origin in its interior. Also, assume that $\Delta(r)$, $ r > 0$, is the homothetic image of the
	set $\Delta$, with the centre of homothety in the origin and the coefficient $r > 0$, that is,
	$|\Delta(r)| = r^d |\Delta|$.
	
\begin{definition}
	The first Minkowski functional is defined as
	$$
		M_{r}\left\lbrace \eta\right\rbrace :=|\left\lbrace x\in\Delta(r): \eta(x)>a\right\rbrace |=\int_{\Delta(r)}\chi(\eta(x)>a)dx,
		$$
		where $\chi(\cdot)$ is an indicator function and $a$ is a constant.
\end{definition}
	The functional $M_{r}\left\lbrace \eta\right\rbrace$ has a geometrical meaning, namely, the sojourn measure of the random field $\eta(x)$.\
	The mean and the variance functions of $M_{r}\left\lbrace \eta\right\rbrace$ are defined as follows:
	$$\textbf{E}M_{r}\left\lbrace \eta\right\rbrace=|\Delta|r^{d}\textbf{P}(\eta(x)>a)=|\Delta|r^{d}(1-H(a))$$
	and
	$$\textbf{Var} M_{r}\left\lbrace \eta\right\rbrace=\int_{\Delta(r)}\int_{\Delta(r)}\textbf{P}\left\lbrace \eta(x)>a,\eta(y)>a\right\rbrace dxdy-\left[\textbf{E}M_{r}\left\lbrace \eta\right\rbrace\right]^{2},$$
	or
	$$\textbf{Var}M_{r}\left\lbrace \eta\right\rbrace=\int_{\Delta(r)}\int_{\Delta(r)}Cov\left(\zeta(x),\zeta(y)\right) dxdy,$$
	where $\zeta(x)=\chi(\eta(x)>a)$, $x\in\mathbb {R}^{d}$.\\ 
	Therefore, investigations of the integrals
	$\int_{\Delta(r)}\int_{\Delta(r)}Q\left(\Vert x-y \Vert\right)dxdy$
	are important to analyze the $\textbf{Var} M_{r}\left\lbrace \eta\right\rbrace$. Define two independent random
	vectors $U$ and $V$ that are uniformly distributed inside the set $\Delta(r)$. Then, we have the following representation:
	\begin{align}\label{eq1}
	\int_{\Delta(r)}\int_{\Delta(r)}Q(\Vert x-y \Vert) dxdy &=|\Delta|^{2}r^{2d}\textbf{E}Q(\Vert U-V\Vert)\notag\\
	&=|\Delta|^{2}r^{2d}\int_{0}^ {diam\lbrace\Delta(r)\rbrace}Q(\rho)\psi_{\Delta(r)}(\rho)d\rho,
	\end{align}
	where $\psi_{\Delta(r)}(\rho)$, $\rho\geq 0$, denotes the density function of the distance $\Vert U-V\Vert$ between $U$ and $V$.	
	\begin{lemma}{\rm{(\citealt{peccati2011wiener})}}
		Let $\left(\eta_{1},\ldots,\eta_{2p}\right)$ be a $2p$-dimensional zero-mean Gaussian vector with
		$$ \mathbf{E}
		 \left(\eta_{j}\eta_{k}\right)=
		\begin{cases} 
		1,\quad\textsl{if}\quad k=j,\\
		r_{j},\quad\textsl{if}\quad k=j+p\quad and\quad 1 \leq j\leq p,\\
		0, \quad otherwise, 
		\end{cases}
		$$
		then
		$$
		\\\mathbf{E}\prod_{j=1}^{p} H_{k_{j}}(\eta_{j})H_{m_{j}}(\eta_{j+p})=\prod_{j=1}^{p}\delta_{k_{j}}^{m_{j}}k_{j}!r_{j}^{k_{j}},
		$$
		where $\delta_{k_{j}}^{m_{j}}$ is the Kronecker delta function.
	\end{lemma}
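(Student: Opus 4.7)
The plan is to exploit the block structure of the covariance matrix of $(\eta_1,\ldots,\eta_{2p})$ in order to reduce the $2p$-dimensional expectation to a product of bivariate expectations, and then to invoke the classical diagonal formula for Hermite polynomials of correlated standard Gaussians.

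First, I would observe that for $j\neq j'$ in $\{1,\ldots,p\}$, every component of the pair $(\eta_j,\eta_{j+p})$ is uncorrelated with every component of the pair $(\eta_{j'},\eta_{j'+p})$, while within a given pair the two entries have unit variance and correlation $r_j$. Since $(\eta_1,\ldots,\eta_{2p})$ is jointly Gaussian, this block-diagonal covariance structure upgrades uncorrelatedness to joint independence of the $p$ pairs $(\eta_j,\eta_{j+p})$, so the expectation factorizes:
\begin{equation*}
\mathbf{E}\prod_{j=1}^{p}H_{k_j}(\eta_j)H_{m_j}(\eta_{j+p})=\prod_{j=1}^{p}\mathbf{E}\bigl[H_{k_j}(\eta_j)H_{m_j}(\eta_{j+p})\bigr].
\end{equation*}

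Second, for each fixed $j$, the pair $(\eta_j,\eta_{j+p})$ is a bivariate standard Gaussian vector with correlation $r_j$, and the task reduces to proving the classical identity
\begin{equation*}
\mathbf{E}\bigl[H_k(X)H_m(Y)\bigr]=\delta_k^m\,k!\,r^k,
\end{equation*}
valid for any bivariate standard Gaussian $(X,Y)$ with $\mathbf{E}(XY)=r$. I would establish this through the exponential generating function of the Hermite polynomials, $\sum_{k\geq 0}(t^k/k!)\,H_k(x)=\exp(tx-t^2/2)$. A direct computation of the joint moment generating function of $(X,Y)$ gives
\begin{equation*}
\mathbf{E}\bigl[\exp(tX-t^2/2)\exp(sY-s^2/2)\bigr]=\exp(rts)=\sum_{k\geq 0}\frac{(rts)^k}{k!},
\end{equation*}
and matching coefficients of $t^k s^m$ on both sides yields the identity.

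Combining the two steps produces the stated formula. I do not anticipate a serious obstacle: the core content is the standard bivariate Hermite orthogonality relation, which admits the elementary generating-function derivation sketched above. The only subtlety deserving explicit mention is the step from block-uncorrelatedness to joint independence of the $p$ pairs, which is specific to the Gaussian setting and follows from the factorization of the joint density induced by the block-diagonal form of the covariance matrix.
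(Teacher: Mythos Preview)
Your argument is correct. The paper does not supply its own proof of this lemma: it is stated with a citation to Peccati and Taqqu (2011) and used as a known result, so there is no in-paper proof to compare against. What you have written is the standard derivation---factorising via independence of the Gaussian blocks $(\eta_j,\eta_{j+p})$ and then applying the bivariate Hermite orthogonality identity obtained from the generating function $\sum_{k\ge 0}(t^k/k!)H_k(x)=\exp(tx-t^2/2)$---and it goes through without difficulty.
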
 
	Consider
	$$
	e_{v}(\omega)= \prod_{j=1}^{p}H_{k_{j}}(\omega_{j}),
	$$
	where $\omega=(\omega_{1},\dots,  \omega_{p})' \in \mathbb{R}^{p}$, $v=(k_{1},\dots,k_{p})\in \mathbb{Z}^{p},$ and all $k_{j}\geq 0$ for $j = 1,\dots,p.$\
	
	The polynomials $\{e_{v}(\omega)\}_{v}$ form a complete orthogonal system in the Hilbert space 
\[
L_{2}\left(\mathbb{R}^{p},\phi(\lVert\omega\rVert)d\omega\right)=\left \{G: \int_{\mathbb{R}^{p}}G^{2}(\omega)\phi(\lVert\omega\rVert)d\omega<\infty\right\},
\]
	where
	$$\phi(\lVert\omega\rVert)= \prod_{j=1}^{p}\phi(\omega_{j}),\quad\quad \phi(\omega_{j})= \dfrac{e^ {-\omega_{j}^2/2}}{\sqrt{2\pi}}. $$

	An arbitrary function $G(\omega)\in L_{2}\left(\mathbb{R}^{p},\phi(\lVert\omega\rVert)d\omega\right)$ admits an expansion with Hermite coefficients $C_{v}$, given as the following:
	$$
	G(\omega)=\sum_{k=0}^{\infty}\sum_{v\in N_{\kappa}}\dfrac{C_{v}e_{v}(\omega)}{v!},\quad C_{v}=\int_{\mathbb{R}^{p}}G(\omega)e_{v}(\omega)\phi(\lVert\omega\rVert)d\omega,
	$$
	where $v!= k_{1}!\dots k_{p}!$ and $$N_{k}=\left\lbrace(k_{1},\dots,k_{p})\in\mathbb{Z}^{p}: \sum_{j=1}^{p}k_{j}=\kappa,k_{j}\geqslant 0,j=1,\dots,p\right\rbrace.$$\
	
By Parseval's identity, 
	$$\sum_{k=0}^{\infty}\sum_{v\in N_{\kappa}}\dfrac{C_{v}^{2}}{v!}=\int_{\mathbb{R}^{p}}G^{2}(\omega)\phi(\lVert\omega\rVert)d\omega.$$
	\begin{definition} The smallest integer $\kappa\geqslant 1$ such that $C_{v}=0$ for all $v \in N_{j}$ and $j=1,\dots,\kappa-1$, but $C_{v}\neq 0$ for some $v \in N_{\kappa}$ is called the Hermite rank of $G(\cdot)$ and is denoted by $H rank G$.
	\end{definition}

	In this paper, we consider an example of heavy tailed random fields constructed from Gaussian fields. To define these fields, we use a vector random field $\bm{\eta}(x)=[\eta_{1}(x),\dots,\eta_{m}(x)]^{'}$, $ x\in\mathbb{R}^{d},$ with $\textbf{E}\bm{\eta}(x)=0$ and $\eta_{i}(x)$, $i=1,\dots, m $, that are independent homogeneous isotropic unit variance Gaussian random fields. 
	
	\begin{definition} The Fisher--Snedecor random field is defined by
		$$F_{n,m-n}(x)=\dfrac{(\eta_{1}^{2}(x)+\dots+\eta_{n}^{2}(x))/n}{(\eta_{n+1}^{2}(x)+\dots+\eta_{m}^{2}(x))/(m-n)},\quad x\in\mathbb{R}^{d}.
		$$
	\end{definition}
	The random field $F_{n,m-n}(x)$ has a marginal Fisher--Snedecor distribution with the probability density function 
	$$
	h(u)=\dfrac{n^{n/2}(m-n)^{\frac{m-n}{2}}\Gamma(m/2)}{\Gamma(n/2)\Gamma((m-n)/2)}\dfrac{u^{n/2-1}}{(m-n+nu)^{m/2}},\quad u\in [0,\infty),
	$$
	and the cumulative distribution function 
	$$
	H(u)=I_{\frac{nu}{m-n+nu}}\left(\dfrac{n}{2},\dfrac{m-n}{2}\right),
	$$
	where
	$$
	I_{\mu}(p,q)=\frac{\Gamma(p+q)}{\Gamma(p)\Gamma(q)}\int_{0}^{\mu}t^{p-1}(1-t)^{q-1}dt,\quad \mu \in (0,1], p>0, q>0,
	$$
	is the incomplete beta function.
	
	\section{Assumptions and auxiliary results}\label{sec3}
	In this section we introduce some assumptions and auxiliary results.
	\begin{assumption}\label{ass4}
		Let $\bm{\eta}(x)=[\eta_{1}(x),\dots,\eta_{m}(x)]'$, $ x\in\mathbb{R}^{d}$, be a vector homogeneous isotropic Gaussian random field with $\textbf{E}\bm{\eta}(x)=0$ and a covariance matrix $\tilde{B}(x)$ such that 
		\begin{displaymath}
		\tilde{B}(0)=\mathcal{I},\quad
		\tilde{B}_{ij}(\|x\|)=\left\{
		\begin{array}{lr}
		0,\quad\textsl{if}\quad i\neq j,\\
		\|x\|^{-\alpha_{j}}L_{j}\left(\|x\|\right),\quad\textsl{if}\quad i=j ,
		\end{array}
		\right.
		\end{displaymath} 	
		where $i,j\in \{1,\dots,m \}$,  $\alpha_{j}>0$, and $L_{j}\left(\|x\|\right)$ are slowly varying functions at infinity.
	\end{assumption}
	Note that for $\alpha_{j}\in (0,d)$, $j=1,\dots,m,$ the diagonal elements of the covariance matrix $\tilde{B}(x)$ satisfying Assumption~{\rm\ref{ass4}} are not integrable, which corresponds to the case of long-range dependence. \
	
	For simplicity, this paper investigates only the case of uncorrelated components. Generalizations to cross-correlated random field can also be obtained using the approach in~\citealt{leonenko2014sojourn}.  
	
	\begin{assumption}\label{ass2}
		A random field $\eta_{j}\left(x\right)$, $j=1,\dots,m$, has a spectral density $f_{j}\left(\|\lambda\|\right)$, $\lambda\in \mathbb{R}^{d}$, such that
		$$
		f_{j}\left(\|\lambda\|\right)\sim c_{2}\left(d,\alpha_{j}\right)\|\lambda\|^{\alpha_{j}-d}L_{j}\left(\dfrac{1}{\|\lambda\|}\right), \quad\quad\quad \Vert \lambda\Vert\rightarrow 0,
		$$
		where $0<\alpha_{j}<d$ and
		$$c_{2}\left(d,\alpha_{j}\right)=\dfrac{\Gamma\left((d-\alpha_{j})/{2}\right)}{2^{\alpha_{j}}\pi^{d/2}\Gamma\left(\alpha_{j}/2\right)}.$$\
	\end{assumption}
	Denote the Fourier transform of the indicator function of the set $\Delta$ by 
	$$
	\mathcal{K} \left(x\right):=\int_{\Delta}e^{i\langle u,x \rangle}du,\quad x\in\mathbb{R}^{d}.\\
	$$
	\begin{theorem}{\rm{(\citealt{leonenko2014sojourn})}}\label{the2}
		Let $\eta_{j}\left(x\right)$, $ x\in\mathbb{R}^{d}$, $j=1,\dots,m,$ be a homogeneous isotropic Gaussian random field with $\textbf{E}\eta_{j}\left(x\right)=0$. If Assumptions~{\rm\ref{ass4}} and~{\rm\ref{ass2}} hold, then for $r \rightarrow \infty$ the random variables
		$$X_{r,\kappa,j}=r^{\kappa\alpha_{j}/2-d}L_{j}^{-\kappa/2}(r)\int_{\Delta(r)}H_{\kappa}\left(\eta_j(x)\right)dx$$\
		converge weakly to 
		\begin{align}\label{eq3}
			X_{\kappa,j}(\Delta)=c_{2}^{\kappa/2}(d,\alpha_{j})\int_{\mathbb{R}^{d\kappa}}^{\prime}\mathcal{K}\left(\lambda_{1}+\dots+\lambda_{\kappa}\right)\dfrac{W(d\lambda_{1})\dots W(d\lambda_{\kappa})}{\|\lambda_{1}\|^{(d-\alpha_{j})/2}\dots \|\lambda_{\kappa}\|^{(d-\alpha_{j})/2}},
		\end{align} 
		where $\int_{\mathbb{R}^{d\kappa}}^{\prime}$ denotes the multiple Wiener-It\^{o} integral and $W(\cdot)$ is the complex Gaussian white noise random measure on $\mathbb{R}^{d}$.\ 
	\end{theorem}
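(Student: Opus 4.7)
The plan is to expand $H_\kappa(\eta_j(x))$ as a single multiple Wiener-Itô integral via the isonormal spectral representation, rescale both spectral variables and the white noise, and then show $L_2$-convergence of the rescaled kernel to the kernel appearing in~(\ref{eq3}); weak convergence of $X_{r,\kappa,j}$ to $X_{\kappa,j}(\Delta)$ then follows from the $L_2$-isometry of multiple stochastic integrals.

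For the first step, the Itô formula applied to the spectral representation of $\eta_j$ gives
\[
H_\kappa(\eta_j(x))=\int_{\mathbb{R}^{d\kappa}}^{\prime} e^{i\langle\lambda_1+\cdots+\lambda_\kappa,\,x\rangle}\prod_{i=1}^{\kappa}\sqrt{f_j(\|\lambda_i\|)}\,W(d\lambda_1)\cdots W(d\lambda_\kappa),
\]
and a stochastic Fubini argument together with $\Delta(r)=r\Delta$ yields
\[
\int_{\Delta(r)}H_\kappa(\eta_j(x))\,dx = r^{d}\int_{\mathbb{R}^{d\kappa}}^{\prime}\mathcal{K}\bigl(r(\lambda_1+\cdots+\lambda_\kappa)\bigr)\prod_{i=1}^{\kappa}\sqrt{f_j(\|\lambda_i\|)}\,W(d\lambda_1)\cdots W(d\lambda_\kappa).
\]
For the second step, substitute $\mu_i=r\lambda_i$ and use the distributional identity $W(r^{-1}d\mu)\stackrel{d}{=}r^{-d/2}\widetilde W(d\mu)$, producing a factor $r^{-d\kappa/2}$. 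Inserting the asymptotic $f_j(\|\mu_i\|/r)\sim c_2(d,\alpha_j)\|\mu_i\|^{\alpha_j-d}r^{d-\alpha_j}L_j(r/\|\mu_i\|)$ of Assumption~\ref{ass2} and applying slow variation of $L_j$ identifies the global scaling factor as $c_2^{\kappa/2}(d,\alpha_j)\,r^{\,d-\kappa\alpha_j/2}L_j^{\kappa/2}(r)$, which cancels exactly with the normalising constant $r^{\kappa\alpha_j/2-d}L_j^{-\kappa/2}(r)$ and leaves the kernel of~(\ref{eq3}).

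For the third step, denote the pre-limit and limit kernels by $Q_r$ and $Q$. Assumption~\ref{ass2} together with uniform convergence of slowly varying functions on compacts bounded away from the origin gives $Q_r\to Q$ a.e. To upgrade this to convergence in $L_2(\mathbb{R}^{d\kappa})$ I would invoke dominated convergence: Potter's bounds on $L_j$ combined with $\mathcal{K}\in L_2(\mathbb{R}^{d})$ and its generic directional polynomial decay for bounded convex $\Delta$ produce an integrand controlled by an expression of the form $C\,|\mathcal{K}(\mu_1+\cdots+\mu_\kappa)|^{2}\prod\|\mu_i\|^{-(d-\alpha_j)\pm\varepsilon}$. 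Its integrability for small $\varepsilon>0$ is the classical local integrability condition behind the Dobrushin-Major-Taqqu non-central limit theorem and holds precisely because $\alpha_j\in(0,d)$. With $Q_r\to Q$ in $L_2$, the isometry of multiple Wiener-Itô integrals delivers $L_2$-convergence, hence weak convergence, $X_{r,\kappa,j}\Rightarrow X_{\kappa,j}(\Delta)$.

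The main obstacle is the uniform $L_2$-domination in the third step. Near each coordinate origin, the kernel carries an integrable singularity of order $\|\mu_i\|^{-(d-\alpha_j)}$, while at infinity the factor $\mathcal{K}(\mu_1+\cdots+\mu_\kappa)$ fails to decay along the resonance hyperplane $\sum\mu_i=0$. Balancing these two regimes simultaneously, and controlling the slowly varying correction uniformly in $r$ via Potter's bounds, is the technical heart of the argument and is exactly what makes the condition $\alpha_j\in(0,d)$ both necessary and sufficient for the limit object in~(\ref{eq3}) to be a well-defined element of the $\kappa$-th Wiener chaos.
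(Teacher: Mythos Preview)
The paper does not prove this theorem: it is quoted verbatim from \citealt{leonenko2014sojourn} (as signalled by the citation in the theorem heading) and no proof is given here. So there is nothing in the present paper to compare your argument against.

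That said, your sketch is the standard Dobrushin--Major--Taqqu route and is essentially how the result is established in the cited reference and in the earlier literature (\citealt{dobrushin1979non,taqqu1979convergence,ivanov1989statistical}). The three steps --- It\^o formula in the spectral domain, self-similar rescaling of the white noise, and $L_2$-convergence of kernels --- are exactly right, and you correctly isolate the only nontrivial point, namely the uniform $L_2$ domination needed for dominated convergence. One small caution: your claimed dominating function $C\,|\mathcal{K}(\mu_1+\cdots+\mu_\kappa)|^{2}\prod\|\mu_i\|^{-(d-\alpha_j)\pm\varepsilon}$ with a single Potter exponent $\pm\varepsilon$ is a bit too optimistic as stated; in practice one splits $\mathbb{R}^{d\kappa}$ into the regions $\|\mu_i\|\le 1$ and $\|\mu_i\|>1$ separately for each coordinate, applying the appropriate Potter bound on each piece, and then checks integrability region by region (near the origin the singularities are locally integrable since $\alpha_j\in(0,d)$, and the decay of $\mathcal{K}$ handles the tails). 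Also, the implicit constraint $\kappa\alpha_j<d$ is what guarantees the limit integral is finite --- you allude to this but do not state it explicitly. With those refinements the argument goes through.
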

	\begin{remark}
		The random variable $X_{\kappa,j}(\Delta)$ is given in terms of the multiple Wiener-It\^{o} stochastic integral. For $\kappa = 2$, the probability distribution of $X_{2,j}(\Delta)$ is known as a Rosenblatt-type distribution which is a generalisation of the Rosenblatt distribution to an arbitrary set $\Delta(r)$ (\rm see~\citealt{anh2015rate,taqqu1975weak}).
	\end{remark}

	\begin{lemma}\label{rem3}
		Let $k_{1,l},\dots, k_{m,l}\in \left\lbrace 0,\dots, l\right\rbrace$, $l\in \mathbb{N}$, satisfy the restriction $\sum_{j=1}^{m}k_{j,l}=l.$ If for some $l_{0}\in \mathbb{N}$ it holds $\sum_{j=1}^{m}\alpha_{j}k_{j,l_{0}} < (l_{0}+1)\min_{1\leq j \leq m}(\alpha_{j}) $ then for any $l>l_{0}$ and $\left\lbrace k_{j,l}\right\rbrace$  
		\begin{align}\label{n1}
			\sum_{j=1}^{m}\alpha_{j}k_{j,l}>\sum_{j=1}^{m}\alpha_{j}k_{j,l_{0}}
		\end{align}
		and, moreover, 
		\begin{align*}
			\sum_{j=1}^{m}\alpha_{j}k_{j,l}-\sum_{j=1}^{m}\alpha_{j}k_{j,l_{0}}>\delta,
		\end{align*}
		where the constant 	$\delta:=\delta\left(\{\alpha_{j}\},\{k_{j,l_{0}}\}\right)>0$	
		depends only on $\left\lbrace \alpha_{j} \right\rbrace $ and $\left\lbrace k_{j,l_{0}}\right\rbrace. $
	\end{lemma}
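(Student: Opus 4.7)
The key observation is that the lemma is an elementary statement about integer compositions weighted by the $\alpha_j$'s, so the plan is to bound the weighted sum from below using the minimum coefficient and then invoke the hypothesis directly. Let me write $\alpha_{\min}:=\min_{1\le j\le m}\alpha_j$. Because all $k_{j,l}\ge 0$ and $\sum_{j=1}^{m}k_{j,l}=l$, the weighted sum satisfies
\begin{equation*}
\sum_{j=1}^{m}\alpha_j k_{j,l}\;\ge\;\alpha_{\min}\sum_{j=1}^{m}k_{j,l}\;=\;l\,\alpha_{\min},
\end{equation*}
with the minimum attained by placing all the mass on an index realising $\alpha_{\min}$. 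This is the only nontrivial inequality used; after this the argument is essentially arithmetic.

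Now fix $l>l_0$, so $l\ge l_0+1$. Applying the bound above gives $\sum_{j=1}^{m}\alpha_j k_{j,l}\ge (l_0+1)\alpha_{\min}$, uniformly in the particular composition $\{k_{j,l}\}$. On the other hand, the standing assumption of the lemma reads
\begin{equation*}
\sum_{j=1}^{m}\alpha_j k_{j,l_0}\;<\;(l_0+1)\alpha_{\min}.
\end{equation*}
Chaining these two inequalities immediately yields \eqref{n1}.

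For the quantitative refinement I would simply set
\begin{equation*}
\delta\;:=\;(l_0+1)\alpha_{\min}-\sum_{j=1}^{m}\alpha_j k_{j,l_0},
\end{equation*}
which is strictly positive by hypothesis and depends only on $\{\alpha_j\}$ and $\{k_{j,l_0}\}$ (and not on $l$ or on the choice of the new composition $\{k_{j,l}\}$). Then
\begin{equation*}
\sum_{j=1}^{m}\alpha_j k_{j,l}-\sum_{j=1}^{m}\alpha_j k_{j,l_0}\;\ge\;(l_0+1)\alpha_{\min}-\sum_{j=1}^{m}\alpha_j k_{j,l_0}\;=\;\delta.
\end{equation*}
If the statement in the paper insists on a strict inequality $>\delta$ rather than $\ge\delta$, one merely replaces $\delta$ by $\delta/2$.

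There is essentially no obstacle: the whole proof rests on the trivial lower bound $\sum \alpha_j k_{j,l}\ge l\,\alpha_{\min}$. The only thing worth double-checking is that the bound $(l_0+1)\alpha_{\min}$ from the hypothesis is the right ``slack'' to compare against, and that $\delta$ is manifestly independent of $l$ and of the particular new composition $\{k_{j,l}\}$, which is automatic from its definition.
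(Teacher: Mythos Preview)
Your proof is correct and follows essentially the same approach as the paper: bound $\sum_j \alpha_j k_{j,l}\ge l\,\alpha_{\min}\ge (l_0+1)\alpha_{\min}$, compare with the hypothesis, and take $\delta=(l_0+1)\alpha_{\min}-\sum_j\alpha_j k_{j,l_0}$. Your remark about replacing $\delta$ by $\delta/2$ to force a strict inequality is a nice touch that the paper glosses over.
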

\begin{proof}
	Note that as all $k_{j,l}\geq 0$ then 
	$$\sum_{j=1}^{m}\alpha_{j}k_{j,l}\geq \min_{1\leq j\leq m}(\alpha_{j})\sum_{j=1}^{m}k_{j,l}=l\cdot\min_{1\leq j\leq m}(\alpha_{j}).$$
	By the assumptions on $\{\alpha_{j}\}$ and the inequality $l>l_{0}$
	$$l\cdot\min_{1\leq j\leq m}(\alpha_{j})>\sum_{j=1}^{m}\alpha_{j}k_{j,l_{0}},$$
	which gives~({\rm\ref{n1}}). The constant $\delta$ can be chosen as 
	$$\delta=(l_{0}+1)\min_{1\leq j\leq m}(\alpha_{j})-\sum_{j=1}^{m}\alpha_{j}k_{j,l_{0}},$$
	which completes the proof.\qed
	\end{proof}
	\begin{remark}
		As $\sum_{j=1}^{m}\alpha_{j}k_{j,l_{0}}\leq 
		l_{0}\max_{1 \leq j \leq m}(\alpha_{j})$ then it follows that the statement of Lemma~{\rm\ref{rem3}} is true if $$\dfrac{\max_{1 \leq j \leq m}(\alpha_{j})}{\min_{1 \leq j \leq m}(\alpha_{j})}\leq\dfrac{l_{0}+1}{l_{0}}=1+\dfrac{1}{l_{0}}.$$ 	
	\end{remark}
	In Assumption~{\rm\ref{ass4}} the slowly varying functions $L_{j}(\cdot)$ and the powers $\alpha_{j}$, $j=1,\dots,m,$ can be different, compare to the case of equal $\alpha_{j}=\alpha$ and $L_{j}(\cdot)=L(\cdot)$ in~\citealt{leonenko2014sojourn}. Therefore, one needs the next result about magnitudes of the products $\prod_{j=1}^{m}B_{jj}^{k_{j,l}}(\cdot)$. 
	
	\begin{lemma}\label{lem4}
		If $l_{0}$, $\{\alpha_{j}\}$ and $\left\lbrace k_{j,l_{0}}\right\rbrace$ are selected as in Lemma~{\rm\ref{rem3}} then there is a constant $C$ such that for all $l>l_{0}$ and $\left\lbrace k_{j,l}\right\rbrace $ 
		\begin{align}\label{n2}
			\prod_{j=1}^{m}B_{jj}^{k_{j,l}}(z)\leq C\prod_{j=1}^{m}B_{jj}^{k_{j,l_{0}}}(z), \quad z\geq0.
		\end{align}
	\end{lemma}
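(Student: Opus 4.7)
The strategy is to split $[0,\infty)$ into a bounded region $[0,Z_{0}]$ and a tail $[Z_{0},\infty)$, using very different estimates on each piece. Two elementary facts are used throughout: each $B_{jj}$ is a real covariance, so $|B_{jj}(z)|\leq B_{jj}(0)=1$; and by Assumption~\ref{ass4} combined with positivity of slowly varying functions, $B_{jj}(z)=z^{-\alpha_{j}}L_{j}(z)>0$ for $z>0$, with $B_{jj}(z)\to 1$ as $z\to 0^{+}$. Thus $B_{jj}$ is continuous on $[0,Z_{0}]$ and bounded below there by a positive constant, uniformly in $j$.

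On $[0,Z_{0}]$, each factor $B_{jj}^{k_{j,l}}(z)$ lies in $(0,1]$, so $\prod_{j}B_{jj}^{k_{j,l}}(z)\leq 1$; with $c:=\min_{j}\inf_{[0,Z_{0}]}B_{jj}>0$ one has $\prod_{j}B_{jj}^{k_{j,l_{0}}}(z)\geq c^{l_{0}}$, giving the ratio bound $c^{-l_{0}}$ independently of $l$ and $\{k_{j,l}\}$. On $[Z_{0},\infty)$, I would substitute the explicit form of $B_{jj}$ to write
\[
\frac{\prod_{j}B_{jj}^{k_{j,l}}(z)}{\prod_{j}B_{jj}^{k_{j,l_{0}}}(z)}
=z^{-(\beta_{l}-\beta_{l_{0}})}\prod_{j}L_{j}(z)^{k_{j,l}-k_{j,l_{0}}},\qquad \beta_{l}:=\sum_{j}\alpha_{j}k_{j,l}.
\]
Lemma~\ref{rem3} yields $\beta_{l}-\beta_{l_{0}}>\delta>0$, providing strict polynomial decay in $z$. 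For the slowly varying piece, Potter's bound lets me fix any $\epsilon\in(0,\min_{j}\alpha_{j})$ and enlarge $Z_{0}$ so that $z^{-\epsilon}\leq L_{j}(z)\leq z^{\epsilon}$ for all $z\geq Z_{0}$ and every $j$; since $\sum_{j}|k_{j,l}-k_{j,l_{0}}|\leq l+l_{0}$, the ratio is then dominated by $z^{-(\beta_{l}-\beta_{l_{0}})+\epsilon(l+l_{0})}$.

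The main difficulty is obtaining uniformity in $l$, as both the Potter exponents and the sum $l+l_{0}$ grow with $l$. To overcome this, note that $\beta_{l}-\beta_{l_{0}}\geq l\min_{j}\alpha_{j}-l_{0}\max_{j}\alpha_{j}$ grows linearly in $l$; hence the exponent $-(\beta_{l}-\beta_{l_{0}})+\epsilon(l+l_{0})$ becomes non-positive once $l$ exceeds a threshold $L_{\max}=L_{\max}(l_{0},\{\alpha_{j}\},\epsilon)$, and for such $l$ with $z\geq Z_{0}\geq 1$ the ratio is at most $1$. For the finitely many remaining $l\in\{l_{0}+1,\ldots,L_{\max}\}$ there are only finitely many admissible tuples $\{k_{j,l}\}$, and each corresponding ratio is continuous on $[Z_{0},\infty)$ and tends to $0$ at infinity (again by $\beta_{l}-\beta_{l_{0}}>\delta$), hence is bounded. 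Taking $C$ to be the maximum of $c^{-l_{0}}$, $1$, and these finitely many bounds completes the argument.
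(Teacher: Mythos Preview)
Your argument is correct. Both you and the paper exploit that $0<B_{jj}(z)\leq 1$ and the gap $\beta_l-\beta_{l_0}>\delta$ from Lemma~\ref{rem3}, but the organisation differs. The paper's first move is to observe that, because each $B_{jj}\in(0,1]$, any tuple $\{k_{j,l}\}$ with $l>l_0+1$ can be reduced componentwise to some tuple at level $l_0+1$, and $\prod_j B_{jj}^{k_{j,l}}(z)\leq \prod_j B_{jj}^{k_{j,l_0+1}}(z)$; hence it suffices to prove \eqref{n2} only for $l=l_0+1$. With finitely many tuples at that single level, the paper just checks that each ratio
\[
z^{-(\beta_{l_0+1}-\beta_{l_0})}\prod_{j=1}^{m} L_j(z)^{\,k_{j,l_0+1}-k_{j,l_0}}
\]
is bounded on $[0,\infty)$ (it equals $1$ at $z=0$, is bounded on compacts, and tends to $0$ at infinity since $\beta_{l_0+1}-\beta_{l_0}>\delta$), and takes $C$ to be the maximum over that finite set. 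You instead attack all $l$ simultaneously via the compact/tail split and then, on the tail, separate large $l$ (where a uniform Potter-type estimate $z^{-\epsilon}\leq L_j(z)\leq z^{\epsilon}$ together with the linear growth $\beta_l\geq l\min_j\alpha_j$ forces the net exponent to be non-positive) from the finitely many small $l$. The paper's reduction to $l=l_0+1$ is a one-line shortcut that removes the uniformity-in-$l$ problem entirely and avoids Potter bounds; your route is more hands-on but makes the constants more transparent and would adapt more readily to settings where a blanket bound $B_{jj}\leq 1$ is not available.
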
 
\begin{proof}
	As $B_{jj}(z)\in (0,1]$, it is enough to prove~({\rm\ref{n2}}) for $l=l_{0}+1$.\\
	Using Assumption~{\rm\ref{ass4}} we write
	\begin{align}\label{6}
		\dfrac{\prod_{j=1}^{m}B_{jj}^{k_{j,l_{0}+1}}(z)}{\prod_{j=1}^{m}B_{jj}^{k_{j,l_{0}}}(z)}
		&=\dfrac{\prod_{j=1}^{m}L_{j}^{k_{j,l_{0}+1}}(z)/z^{\alpha_{j}{k_{j,l_{0}+1}}}}{\prod_{j=1}^{m}L_{j}^{k_{j,l_{0}}}(z)/z^{\alpha_{j}{k_{j,l_{0}}}}}\notag\\
		&=\dfrac{\prod_{j=1}^{m}L_{j}^{k_{j,l_{0}+1}}(z)}{\prod_{j=1}^{m}L_{j}^{k_{j,l_{0}}}(z)}z^{\sum_{j=1}^{m}\alpha_{j}{k_{j,l_{0}}}-\sum_{j=1}^{m}\alpha_{j}{k_{j,l_{0}+1}}}.
	\end{align}
	By Lemma~{\rm\ref{rem3}} for each $\left\lbrace k_{j,l_{0}+1}\right\rbrace$ we have
	\begin{align}\label{666}
		\delta\left( \{\alpha_{j}\},\left\lbrace k_{j,l_{0}}\right\rbrace,\left\lbrace k_{j,l_{0}+1}\right\rbrace\right):=\sum_{j=1}^{m}\alpha_{j}k_{j,l_{0}+1}-\sum_{j=1}^{m}\alpha_{j}k_{j,l_{0}}>\delta>0. 
	\end{align}
	Also, by properties of slowly varying functions $$L_{\left\lbrace k_{j,l_{0}}\right\rbrace,\left\lbrace k_{j,l_{0}+1}\right\rbrace}(z)=\frac{\prod_{j=1}^{m}L_{j}^{k_{j,l_{0}+1}}(z)}{\prod_{j=1}^{m}L_{j}^{k_{j,l_{0}}}(z)}$$ is a slowly varying function.
	Hence the expression in~({\rm\ref{6}}) equals $$z^{-\delta\left(\{\alpha_{j}\}, \left\lbrace k_{j,l_{0}}\right\rbrace,\left\lbrace k_{j,l_{0}+1}\right\rbrace\right)}L_{\left\lbrace k_{j,l_{0}}\right\rbrace,\left\lbrace k_{j,l_{0}+1}\right\rbrace}(z).$$
	Note, that $z^{-\delta\left(\{\alpha_{j}\}, \left\lbrace k_{j,l_{0}}\right\rbrace,\left\lbrace k_{j,l_{0}+1}\right\rbrace\right)}L_{\left\lbrace k_{j,l_{0}}\right\rbrace,\left\lbrace k_{j,l_{0}+1}\right\rbrace}(z)$ is bounded on each interval $[a,b]$, $a>0$, $b<+\infty$.\\
	Moreover, by properties of slowly varying functions and~({\rm\ref{666}}) we get 
	\begin{align*}
		\lim_{z\rightarrow \infty}z^{-\delta\left(\{\alpha_{j}\}, \left\lbrace k_{j,l_{0}}\right\rbrace,\left\lbrace
			k_{j,l_{0}+1}\right\rbrace\right)}L_{\left\lbrace k_{j,l_{0}}\right\rbrace,\left\lbrace k_{j,l_{0}+1}\right\rbrace}(z)=0
	\end{align*}
	and
	\begin{align*}
		\lim_{z\rightarrow 0}z^{-\delta\left(\{\alpha_{j}\}, \left\lbrace k_{j,l_{0}}\right\rbrace,\left\lbrace
			k_{j,l_{0}+1}\right\rbrace\right)}L_{\left\lbrace k_{j,l_{0}}\right\rbrace,\left\lbrace k_{j,l_{0}+1}\right\rbrace}(z)=\dfrac{\prod_{j=1}^{m}B_{jj}^{k_{j,l_{0}+1}}(0)}{\prod_{j=1}^{m}B_{jj}^{k_{j,l_{0}}}(0)}=1.
	\end{align*}
	Thus, for each $\left\lbrace k_{j,l_{0}+1}\right\rbrace $, the term $z^{-\delta\left(\{\alpha_{j}\}, \left\lbrace k_{j,l_{0}}\right\rbrace,\left\lbrace
		k_{j,l_{0}+1}\right\rbrace\right)}L_{\left\lbrace k_{j,l_{0}}\right\rbrace,\left\lbrace k_{j,l_{0}+1}\right\rbrace}(z)$ is bounded on $[0,\infty)$ and there is a constant $C_{\left\lbrace k_{j,l_{0}+1} \right\rbrace }$ such that
	$$z^{-\delta\left(\{\alpha_{j}\}, \left\lbrace k_{j,l_{0}}\right\rbrace,\left\lbrace
		k_{j,l_{0}+1}\right\rbrace\right)}L_{\left\lbrace k_{j,l_{0}}\right\rbrace,\left\lbrace k_{j,l_{0}+1}\right\rbrace}(z)\leq  C_{\left\lbrace k_{j,l_{0}+1} \right\rbrace },\quad z\in [0,\infty).$$ 
	For each $l_{0}\in \mathbb{N}$, the number of sets $\left\lbrace k_{j,l_{0}+1} \right\rbrace$ is finite as $\sum_{j=1}^{m}k_{j,l_{0}+1}=l_{0}+1$ and $k_{j,l_{0}+1} \in \mathbb{N}\bigcup\{0\}$. Therefore, the constant $C$ in~({\rm\ref{n2}}) can be selected as $C= \max C_{\left\lbrace k_{j,l_{0}+1} \right\rbrace }<\infty$.\qed
\end{proof}
\section{Main result}\label{sec4}
In this section, we prove a version of the reduction principle for functionals of vector long-range dependent random fields. The result generalises Theorem 4 by~\citealt{leonenko2014sojourn} that studied the case of vector fields having the same type of long-range dependent components. It shows that even for the case of different components, the leading terms at $H rank G$ level determine asymptotic distributions.\

Consider the following two random variables:
$$K_{r}=\int_{\bigtriangleup(r)}G\left(\bm{\eta}\left(x\right)\right)dx\quad and\quad K_{r,\kappa}=\sum_{v\in N_{\kappa}}\dfrac{C_{v}}{v!}\int_{\bigtriangleup(r)}e_{v}\left(\bm{\eta}\left(x\right)\right)dx,$$
where $C_{\kappa}$ are the Hermite coefficients of the function $G(\cdot)$, $v=(k_{1,\kappa},\dots,k_{m,\kappa})$ and $\sum_{i=1}^{m}k_{i,\kappa}=\kappa$.
\begin{theorem}\label{the3}
	Suppose that $\bm{\eta}\left(x\right)$, $ x\in\mathbb{R}^{d}$, satisfies Assumption~{\rm\ref{ass4}}, $H rank G=\kappa\geq 1$, $\sum_{j=1}^{m}\alpha_{j}k_{j,\kappa}\leq (\kappa+1)\min_{1\leq j\leq m}(\alpha_{j})$, and 
	$\sum_{j=1}^{m}\alpha_{j}k_{j,\kappa}<d$.
	If a limit distribution exists for at least one of the random variables
	$$\dfrac{K_{r}}{\sqrt{\textbf{Var}K_{r}}}\quad and\quad \dfrac{K_{r,\kappa}}{\sqrt{\textbf{Var}K_{r,\kappa}}},$$
	then the limit distribution of the other random variable exists as well, and the limit distributions coincide when $r\rightarrow \infty$. 	
\end{theorem}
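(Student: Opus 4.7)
The plan is to control $K_r - K_{r,\kappa}$ in $L^2$ and close the argument via Slutsky's lemma. I would expand $G(\bm{\eta}(x))$ in its multivariate Hermite series and write
\[
K_r - K_{r,\kappa} = \sum_{l > \kappa}\sum_{v \in N_l} \frac{C_v}{v!}\int_{\Delta(r)} e_v(\bm{\eta}(x))\,dx.
\]
Since the components $\eta_j$ are independent, Lemma~1 applied coordinatewise gives $\textbf{Cov}\bigl(e_v(\bm{\eta}(x)),e_{v'}(\bm{\eta}(y))\bigr) = \mathbf{1}_{\{v=v'\}}\,v!\prod_{j=1}^{m}B_{jj}^{k_{j,l}}(\|x-y\|)$, so Hermite terms at distinct multi-indices are uncorrelated. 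Consequently
\[
\textbf{Var}(K_r-K_{r,\kappa}) = \sum_{l>\kappa}\sum_{v\in N_l}\frac{C_v^2}{v!}\int_{\Delta(r)}\int_{\Delta(r)}\prod_{j=1}^{m}B_{jj}^{k_{j,l}}(\|x-y\|)\,dx\,dy,
\]
with the analogous formula (at $l=\kappa$) for $\textbf{Var}\,K_{r,\kappa}$. The task reduces to showing that $\textbf{Var}(K_r-K_{r,\kappa})/\textbf{Var}\,K_{r,\kappa}\to 0$.

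Next I would fix a multi-index $v^{*}=(k_{1,\kappa}^{*},\dots,k_{m,\kappa}^{*})\in N_\kappa$ with $C_{v^{*}}\neq 0$ that minimises $\sum_j \alpha_j k_{j,\kappa}$ among such $v$. Using the rescaling formula~(\ref{eq1}) with substitution $\rho=rs$, the level-$\kappa$ variance admits the asymptotic order $r^{2d-\sum_j\alpha_j k_{j,\kappa}^{*}}L^{*}(r)$ for some slowly varying $L^{*}$; this follows from Assumption~\ref{ass4}, the hypothesis $\sum_j\alpha_j k_{j,\kappa}^{*}<d$ (ensuring integrability of the rescaled kernel near the origin against the $s^{d-1}$-type density $\psi_{\Delta}$), and Karamata's theorem. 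For every $v\in N_l$ with $l>\kappa$, Lemma~\ref{rem3} yields a positive gap $\delta$ such that $\sum_j\alpha_j k_{j,l}\geq\sum_j\alpha_j k_{j,\kappa}^{*}+\delta$, and the same rescaling argument shows that each corresponding summand, after division by $\textbf{Var}\,K_{r,\kappa}$, is of order $r^{-\delta}\tilde L_v(r)$ and therefore vanishes as $r\to\infty$.

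To pass from pointwise vanishing of each summand to vanishing of the full series I would invoke dominated convergence on the counting measure over $\bigcup_{l>\kappa}N_l$. Lemma~\ref{lem4} furnishes the uniform pointwise majorant $\prod_{j=1}^{m}B_{jj}^{k_{j,l}}(z)\leq C\prod_{j=1}^{m}B_{jj}^{k_{j,\kappa}^{*}}(z)$ valid for every $l>\kappa$ and every admissible $\{k_{j,l}\}$; after integration and normalisation by $\textbf{Var}\,K_{r,\kappa}$ this produces a uniform bound $C\cdot C_v^2/v!$ on each normalised summand, and the series $\sum_{l,v}C_v^2/v!$ is finite by Parseval's identity. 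Dominated convergence then gives $\textbf{Var}(K_r-K_{r,\kappa})=o(\textbf{Var}\,K_{r,\kappa})$. Orthogonality implies $\textbf{Var}\,K_r=\textbf{Var}\,K_{r,\kappa}+\textbf{Var}(K_r-K_{r,\kappa})\sim\textbf{Var}\,K_{r,\kappa}$, so $(K_r-K_{r,\kappa})/\sqrt{\textbf{Var}\,K_r}\to 0$ in probability, and Slutsky's lemma identifies the limit distributions of $K_r/\sqrt{\textbf{Var}\,K_r}$ and $K_{r,\kappa}/\sqrt{\textbf{Var}\,K_{r,\kappa}}$.

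The step I expect to be most delicate is the individual-summand asymptotic for those $v\in N_l$ with $\sum_j\alpha_j k_{j,l}\geq d$, where the clean Theorem~\ref{the2} power-law no longer applies directly to the double integral. I would split the rescaled distance integral into a near-diagonal piece handled via the bound $B_{jj}\leq 1$ and the $s^{d-1}$-type behaviour of $\psi_{\Delta}$ at the origin, and a tail piece controlled by the uniform majorant of Lemma~\ref{lem4} together with the $\delta$-gap from Lemma~\ref{rem3}; the restrictions $\sum_j\alpha_j k_{j,\kappa}\leq(\kappa+1)\min_j\alpha_j$ and $\sum_j\alpha_j k_{j,\kappa}<d$ are precisely what make this split compatible with both the individual decay estimate and the dominated convergence step.
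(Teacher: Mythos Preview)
Your proposal is correct and follows essentially the same route as the paper: expand $K_r-K_{r,\kappa}$ in Hermite polynomials, use orthogonality to get the variance as a sum of double integrals $\int\!\!\int \prod_j B_{jj}^{k_{j,l}}(\|x-y\|)\,dx\,dy$, and then combine the $\delta$-gap of Lemma~\ref{rem3}, the pointwise majorant of Lemma~\ref{lem4}, a near/far splitting of the distance integral, and Parseval's identity to conclude $\textbf{Var}(K_r-K_{r,\kappa})=o(\textbf{Var}\,K_{r,\kappa})$.

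The only organisational difference is in how the infinite sum over $v$ is handled. You phrase it as dominated convergence on the index set: each normalised summand tends to zero and is dominated by $C\cdot C_v^2/v!$ via Lemma~\ref{lem4}. The paper instead first replaces every level-$l$ product $\prod_j B_{jj}^{k_{j,l}}$ by a level-$(\kappa+1)$ product $\prod_j B_{jj}^{\tilde{k}_{j,l}}$ using $B_{jj}\leq 1$, which reduces the problem to \emph{finitely many} integrands; the near/far split $I_1+I_2$ is then carried out once for each of these finitely many multi-indices, yielding a bound uniform in $v$ that is simply multiplied by $\sum_v C_v^2/v!$. This spares the paper from having to verify termwise convergence (in particular the delicate case $\sum_j\alpha_j k_{j,l}\geq d$ that you flag in your final paragraph) for infinitely many $v$ separately. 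Your dominated-convergence packaging is arguably cleaner conceptually, while the paper's reduction to level $\kappa+1$ is slightly more economical in execution; both rest on the same two lemmas and the same splitting estimate.
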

\begin{proof}
Let	
$$V_{r}=\sum_{l \geq \kappa+1}\sum_{v\in N_{l}}\dfrac{C_{v}}{v!}\int_{\bigtriangleup(r)}e_{v}\left(\bm{\eta}\left(x\right)\right)dx,$$
then
$$\textbf{Var} \left( K_{r}\right) =\textbf{Var} \left( K_{r,\kappa}\right)  + \textbf{Var} \left( V_{r}\right) .$$
The term $\textbf{Var} \left( K_{r,\kappa}\right)$ can be estimated as 
\begin{align*}
\textbf{Var}\left(  K_{r,\kappa}\right) &=\textbf{Var} \sum_{v\in N_{\kappa}}\dfrac{C_{v}}{v!}\int_{\Delta(r)}e_{v}\left(\bm{\eta}\left(x\right)\right)dx\\
&=\textbf{Var} \sum_{v\in N_{\kappa}}\dfrac{C_{v}}{v!}\int_{\Delta(r)}\prod_{j=1}^{m}H_{k_{j,\kappa}}(\eta_{j}(x))dx \\
&= \sum_{v\in N_{\kappa}}\dfrac{C_{v}^{2}}{v!}\int_{\Delta(r)}\int_{\Delta(r)} \textbf{E} \prod_{j=1}^{m}H_{k_{j,\kappa}}(\eta_{j}(x))H_{k_{j,\kappa}}(\eta_{j}(y))dxdy
\end{align*}
\begin{align*}
&= \sum_{v\in N_{\kappa}}\dfrac{C_{v}^{2}}{v!}\int_{\Delta(r)}\int_{\bigtriangleup(r)}\prod_{j=1}^{m}\left( \Vert x-y \Vert^{-\alpha_{j}} L_{j}(\Vert x-y \Vert)\right) ^{k_{j,\kappa}}dxdy\\
&= \vert \Delta(r) \vert^{2} \sum_{v\in N_{\kappa}}\dfrac{C_{v}^{2}}{v!}\int_{0}^{r\cdot diam\left\lbrace \Delta\right\rbrace} \prod_{j=1}^{m}z^{-\alpha_{j} k_{j,\kappa}} L_{j}^{k_{j,\kappa}}(z) \psi_{\Delta(r)}(z)dz\\
&= \vert \Delta(r) \vert^{2} \sum_{v\in N_{\kappa}}\dfrac{C_{v}^{2}}{v!}
\int_{0}^{r\cdot diam \left\lbrace \Delta\right\rbrace}  z ^{- \sum_{j=1}^{m}\alpha_{j}k_{j,\kappa}}
\prod_{j=1}^{m}L_{j}^{k_{j,\kappa}}(z)\psi_{\Delta(r)}(z)dz\\
&= \vert \Delta \vert^{2}  \sum_{v\in N_{\kappa}}\dfrac{C_{v}^{2}}{v!}r^{2d- \sum_{j=1}^{m}\alpha_{j}k_{j,\kappa}}\int_{0}^{diam \left\lbrace \Delta\right\rbrace}  z ^{- \sum_{j=1}^{m}\alpha_{j}k_{j,\kappa}}\\
&\times\prod_{j=1}^{m} L_{j}^{ k_{j,\kappa}}(zr)    \psi_{\Delta}(z)dz.
\end{align*} 
As a product of slowly varying functions is a slowly varying function too, then, by Theorem 2.7 in~\citealt{seneta1976functions}, we obtain as $r \rightarrow \infty$
\begin{equation}\label{77}
\textbf{Var} \left( K_{r,\kappa}\right) = \vert \Delta \vert^{2} \sum_{v\in N_{\kappa}}\dfrac{C_{v}^{2}}{v!}  c_{1}\left( \left\lbrace \alpha_{j}\right\rbrace ,\left\lbrace k_{j,\kappa}\right\rbrace ,\Delta\right) r^{2d- \sum_{j=1}^{m}\alpha_{j}k_{j,\kappa}}\prod_{j=1}^{m}L_{j}^{k_{j,\kappa}}(r)  (1+\small o(1)), 
\end{equation}
where $c_{1}\left( \left\lbrace \alpha_{j}\right\rbrace ,\left\lbrace k_{j,\kappa}\right\rbrace ,\Delta\right)=\int_{0}^{diam \left\lbrace \Delta\right\rbrace} z ^{- \sum_{j=1}^{m}\alpha_{j}k_{j,\kappa}} \psi_{\Delta}(z)dz.$

Note that all coefficients $c_{1}\left( \left\lbrace \alpha_{j}\right\rbrace ,\left\lbrace k_{j,\kappa}\right\rbrace ,\Delta\right)$ are correctly defined as  
\begin{align*}
c_{1}\left( \left\lbrace \alpha_{j}\right\rbrace ,\left\lbrace k_{j,\kappa}\right\rbrace ,\Delta\right)&=\int_{0}^{diam\{\Delta\}} z^{- \sum_{j=1}^{m}\alpha_{j}k_{j,\kappa}}
\psi_{\Delta}(z)dz\\
&=\vert \Delta \vert^{-2} \int_{\Delta}\int_{\Delta} \Vert x-y \Vert^{-\sum_{j=1}^{m}\alpha_{j}k_{j,\kappa}}dxdy\\
&\leq \vert \Delta \vert^{-1} \int_{0}^{diam\left\lbrace \Delta\right\rbrace } \rho^{d-\left(1+ \sum_{j=1}^{m}\alpha_{j}k_{j,\kappa}\right)} d\rho<\infty.
\end{align*}
Now, for $\textbf{Var} \left( V_{r}\right) $ we obtain 
\begin{align}\label{7}
\textbf{Var} \left( V_{r}\right) &=\vert \Delta(r) \vert^{2}\sum_{l \geq \kappa+1}\sum_{v=(k_{1,l},\dots,k_{m,l})\in N_{l}}\dfrac{C_{v}^{2}}{v!}\int_{0}^{r\cdot 
	diam \left\lbrace \Delta\right\rbrace} \prod_{j=1}^{m} B_{jj}^{k_{j,l}}(z)  \psi_{\Delta(r)}(z)dz\notag\\
&=\vert \Delta\vert^{2} \sum_{l \geq \kappa+1}\sum_{v\in N_{l}}\frac{C_{v}^{2}}{v!}r^{2d}\int_{0}^{diam \left\lbrace \Delta\right\rbrace} \prod_{j=1}^{m} B_{jj}^{k_{j,l}}(rz)  \psi_{\Delta}(z)dz.
\end{align}
For each $l\geq\kappa+1$ and $v=(k_{1,l},\dots,k_{m,l})\in N_{l}$, it is always possible to find $(k_{1,\kappa+1},\dots,k_{m,\kappa+1})\in N_{\kappa}$ such that $k_{i,\kappa+1}\leq k_{i,l}$, $i=1,\dots, m$. Let us denote such $\{k_{j,\kappa+1}\}$ by $\{\tilde{k}_{j,l}\}$. Then we get $\sum_{j=1}^{m}\tilde{k}_{j,l}=\kappa+1$. As $B_{jj}(\cdot)\leq1$, we can estimate the expression in~({\rm\ref{7}}) as follows
\begin{align*}
\textbf{Var} \left( V_{r}\right) \leq\vert \Delta\vert^{2}r^{2d} \sum_{v=(k_{1,l},\dots,k_{m,l})\in N_{l}}\frac{C_{v}^{2}}{v!}\int_{0}^{diam \left\lbrace \Delta\right\rbrace} \prod_{j=1}^{m} B_{jj}^{\tilde{k}_{j,l}}(rz)  \psi_{\Delta}(z)dz. 
\end{align*}  To investigate the above integral we split it into two integrals:

\begin{align*}
&\int_{0}^{diam \left\lbrace \Delta\right\rbrace} \prod_{j=1}^{m} B_{jj}^{\tilde{k}_{j,l}}(rz)  \psi_{\Delta}(z)dz
=\int_{0}^{r^{-\beta}} \prod_{j=1}^{m} B_{jj}^{\tilde{k}_{j,l}}(rz)  \psi_{\Delta}(z)dz\\
&+\int_{r^{-\beta}}^{diam \left\lbrace \Delta\right\rbrace} \prod_{j=1}^{m} B_{jj}^{\tilde{k}_{j,l}}(rz)  \psi_{\Delta}(z)dz
=:I_{1}+I_{2},
\end{align*} 
where $\beta\in (0,1)$.\\
By Lemma~{\rm\ref{lem4}} the first integral $I_{1}$ can be estimated as follows 
\begin{align*}
I_{1}&=\prod_{j=1}^{m} B_{jj}^{{k}_{j,\kappa}}(r) \int_{0}^{r^{-\beta}}\prod_{j=1}^{m}\dfrac{ B_{jj}^{\tilde{k}_{j,l}}(rz)}{ B_{jj}^{{k}_{j,\kappa}}(r)}   \psi_{\Delta}(z)dz\\
&\leq C\prod_{j=1}^{m} B_{jj}^{{k}_{j,\kappa}}(r) \int_{0}^{r^{-\beta}}\prod_{j=1}^{m} \dfrac{B_{jj}^{{k}_{j,\kappa}}(rz)}{B_{jj}^{{k}_{j,\kappa}}(r)}  \psi_{\Delta}(z)dz\\
&= C\prod_{j=1}^{m} B_{jj}^{{k}_{j,\kappa}}(r) \int_{0}^{r^{-\beta}}z^{-\sum_{j=1}^{m}\alpha_{j}k_{j,\kappa}}\prod_{j=1}^{m} \dfrac{L_{j}^{{k}_{j,\kappa}}(rz)}{L_{j}^{{k}_{j,\kappa}}(r)}  \psi_{\Delta}(z)dz.
\end{align*}
Using the estimate (20) in~\citealt{leonenko2014sojourn} we obtain 
\begin{align*}
I_{1}\leq C\prod_{j=1}^{m} B_{jj}^{{k}_{j,\kappa}}(r)\dfrac{\sup_{u\in (0,r)}u^{\epsilon}\prod_{j=1}^{m}L_{j}^{{k}_{j,\kappa}}(u)}{r^{\epsilon}\prod_{j=1}^{m}L_{j}^{{k}_{j,\kappa}}(r)}\int_{0}^{r^{-\beta}}z^{-\epsilon}z^{-\sum_{j=1}^{m}\alpha_{j}k_{j,\kappa}}\psi_{\Delta}(z)dz,
\end{align*}
where $\epsilon$ is an arbitrary positive constant.\

As $\prod_{j=1}^{m}L_{j}^{{k}_{j,\kappa}}(u)$ is a slowly varying function, by Theorem 1.5.3~\citealt{bingham1989regular} we obtain that
$$\lim_{r\rightarrow\infty}   \dfrac{\sup_{u\in (0,r)}u^{\epsilon}\prod_{j=1}^{m}L_{j}^{{k}_{j,\kappa}}(u)}{r^{\epsilon}\prod_{j=1}^{m}L_{j}^{{k}_{j,\kappa}}(r)} =1.$$ 
By (21) in~\citealt{leonenko2014sojourn} we get the following estimate
\begin{align*}
\int_{0}^{r^{-\beta}}z^{-\epsilon}z^{-\sum_{j=1}^{m}\alpha_{j}k_{j,\kappa}}\psi_{\Delta}(z)dz\leq
C r^{-\beta(d-(\epsilon+\sum_{j=1}^{m}\alpha_{j}k_{j,\kappa}))}.
\end{align*}
Now, for the second integral $I_{2}$ we obtain
\begin{align*}
I_{2}&=\int_{r^{-\beta}}^{diam\{\Delta\}}\prod_{j=1}^{m}\dfrac{L_{j}^{\tilde{k}_{j,l}}(rz)}{(rz)^{\sum_{j=1}^{m}\alpha_{j}\tilde{k}_{j,l}}}\psi_{\Delta}(z)dz=\int_{r^{-\beta}}^{diam\{\Delta\}}\prod_{j=1}^{m}\dfrac{L_{j}^{\tilde{k}_{j,l}}(rz)}{L_{j}^{k_{j,\kappa}}(rz)}\\
&\times\dfrac{\prod_{j=1}^{m}L_{j}^{k_{j,\kappa}}(rz)}{(rz)^{\sum_{j=1}^{m}\alpha_{j}k_{j,\kappa}}} (rz)^{{\sum_{j=1}^{m}\alpha_{j}(k_{j,\kappa}-\tilde{k}_{j,l}})} \psi_{\Delta}(z)dz.
\end{align*} Note, that by properties of slowly varying functions
$\tilde{L}(z):=\prod_{j=1}^{m}\frac{L_{j}^{\tilde{k}_{j,l}}(z)}{L_{j}^{k_{j,\kappa}}(z)}$ and $\tilde{L}_{0}(z):=\prod_{j=1}^{m}L_{j}^{k_{j,\kappa}}(z)$
are slowly varying functions. Also, by Lemma~{\rm\ref{rem3}} 
\[\sum_{j=1}^{m} \alpha_j(\tilde{k}_{j,l}-k_{j.k}) \ge \delta(\{\alpha_j\},\{k_{j,\kappa}\}):=\delta_1>0.\]
Therefore, for any $\delta_{2}>0$ we get
\begin{align*}
I_{2}&=\int_{r^{-\beta}}^{diam\{\Delta\}} \tilde{L}(rz) (rz)^{-\delta_{1}}\dfrac{\tilde{L}_{0}(rz)}{(rz)^{\sum_{j=1}^{m}\alpha_{j}k_{j,\kappa}}}
\psi_{\Delta}(z)dz\\
&\leq \dfrac{\tilde{L}_{0}(r)}{r^{\sum_{j=1}^{m}\alpha_{j}k_{j,\kappa}}}\dfrac{\sup_{s\in(r^{1-\beta},r\cdot diam\{\Delta\})}s^{\delta_{2}}\tilde{L}_{0}(s)}{r^{\delta_{2}}\tilde{L}_{0}(r)}\\ &\times\sup_{s\in(r^{1-\beta},r\cdot diam\{\Delta\})}\dfrac{\tilde{L}(s)}{s^{\delta_{1}}}\int_{0}^{diam\{\Delta\}}z^{-\sum_{j=1}^{m}\alpha_{j}k_{j,\kappa}-\delta_{2}}\psi_{\Delta}(z)dz.
\end{align*}
By Theorem 1.5.3 in~\citealt{bingham1989regular},  $\lim_{r\rightarrow\infty}   \dfrac{\sup_{s\in (r^{1-\beta},r\cdot diam\{\Delta\})}s^{\delta_{2}}\tilde{L}_{0}(s)}{r^{\delta_{2}}\tilde{L}_{0}(r)}$ is finite.\\
It follows from (22) in~\citealt{leonenko2014sojourn} that for any $\delta_{2}\in (0,\delta_{1})$
\begin{align*}
\sup_{s\in\left(r^{1-\beta},r.diam\{\Delta\}\right)}\dfrac{\tilde{L}(s)}{s^{\delta_{1}}}=o\left(r^{(\delta_{2}-\delta_{1})(1-\beta)}\right).
\end{align*}
Note, that $\int_{0}^{diam\{\Delta\}}z^{-\sum_{j=1}^{m}\alpha_{j}k_{j,\kappa}-\delta_{2}}\psi_{\Delta}(z)dz<\infty$ if $\delta_{2}\in \left(0,d-\sum_{j=1}^{m}\alpha_{j}k_{j,\kappa}\right)$.
Thus, 
\begin{align*}
I_{2}\leq C\dfrac{\tilde{L}_{0}(r)}{r^{\sum_{j=1}^{m}\alpha_{j}k_{j,\kappa}}}o(r^{(\delta_{2}-\delta_{1})(1-\beta)})=C\prod_{j=1}^{m}B_{jj}^{k_{j,\kappa}}(r)\cdot o\left(r^{(\delta_{2}-\delta_{1})(1-\beta)}\right).
\end{align*}
Finally, combining the above results we obtain
\begin{align*}
\textbf{Var}\left( V_{r}\right) &\leq\vert\Delta\vert^{2}r^{2d}\sum_{v=(k_{1,l},\dots,k_{m,l})\in\mathbb{N}_{l}}\dfrac{C_{v}^{2}}{v!}\left(I_{1}+I_{2}\right)
\leq C\vert\Delta\vert^{2}r^{2d}\sum_{v\in\mathbb{N}_{l}}\dfrac{C_{v}^{2}}{v!}\\
&\times\prod_{j=1}^{m}B_{jj}^{k_{j,\kappa}}(r)\left(r^{-\beta(d-(\epsilon+\sum_{j=1}^{m}\alpha_{j}k_{j,\kappa}))}+o\left( r^{-(\delta_{1}-\delta_{2})(1-\beta)}\right)\right) .
\end{align*}
For small enough $\epsilon$, $\delta_{1}$, $\delta_{2}$ $(\delta_{1}>\delta_{2}>0)$, we can make the powers of the two summands in the parentheses negative.

Noting that $\prod_{j=1}^{m}B_{jj}^{k_{j,\kappa}}(r)=\prod_{j=1}^{m}L_{j}^{k_{j}}(r)/r^{\sum_{j=1}^{m}\alpha_{j}k_{j,\kappa}}$ and comparing with~({\rm\ref{77}}) we obtain $\lim_{r\rightarrow\infty}{\textbf{Var}(V_{r})}/{\textbf{Var}(K_{r,k})}=0$.

It means that $K_{r,k}$ completely determines the asymptotic distribution of $K_{r}$, which completes the proof.\qed	
\end{proof} 
\section{Examples and applications}\label{sec5}
In this section we discuss differences of the obtained results and ones in~\citealt{leonenko2014sojourn}. The framework of dominant components is introduced. We also show how the obtained results can be applied to investigate asymptotic behaviour of Minkowski functionals of Fisher-–Snedecor random fields.\

The statement of Theorem~{\rm\ref{the3}} looks similar to Theorem 4 in~\citealt{leonenko2014sojourn}. However, the case of different $B_{jj}(\cdot)$ is more complex. Indeed,~\citealt{leonenko2014sojourn} considered $B_{jj}(z)=\dfrac{L(z)}{z^{\alpha}}$, for all $j=1,\dots,m$, therefore, for any m-tuple $(k_{1,\kappa},\dots,k_{m,\kappa})$ it held that $\prod_{j=1}^{m}B_{jj}^{k_{j,\kappa}}(z)=\prod_{j=1}^{m}\dfrac{L^{k_{j,\kappa}}(z)}{z^{\alpha k_{j,\kappa}}}=\dfrac{L^{\kappa}(z)}{z^{\alpha \kappa}}$ as $\sum_{j=1}^{m}k_{j,\kappa}=\kappa$. Hence, the same normalizing factor $r^{2d-\alpha \kappa}L^{\kappa}(r)$ was used for each term $\int_{\Delta(r)}e_{v}\left(\bm{\eta}\left(x\right)\right)dx$, $v\in N_{\kappa}$, in $K_{r,\kappa}$. However, if, as in this paper, $B_{jj}(\cdot)$ are different then normalizing factors of $\int_{\Delta(r)}e_{v}\left(\bm{\eta}\left(x\right)\right)dx$ can vary depending on $v \in \mathbb{N}_{\kappa}$. Thus, it is not enough to use the same normalization for all terms at $H rank G$ level, but all asymptotics for individual terms at $H rank G$ level must also be studied.

 We will use the following result from~\citealt{leonenko2014sojourn}.
\begin{theorem}
Let  $\bm{\eta} (x)=[\eta _{1}(x),\ldots ,\eta _{m}(x)]^{\prime }$, $x\in \mathbb{R}^{d},$ satisfy Assumption~{\rm\ref{ass4}} with $\alpha_{j}=\alpha$, $\alpha \in (0,d/2)$, and $L_{j}(r)=L(r)$, $j=1,\dots,m$. Then, for $r\to \infty,$
	the distribution of the random variable
	\[\frac{M_{r}\left\{ F_{n,m-n}\right\}-\left| \Delta \right| r^{d}\left(1-I_{\frac{na}{m-n+na}}\left(\frac{n}{2},\frac{m-n}{2}\right)\right)}{c_4(a,n,m)\,r^{d-\alpha}L(r)}\]
	converges to the distribution of the random variable
	\[\frac{X_{2,1}+...+X_{2,n}}{n}-\frac{X_{2,n+1}+...+X_{2,m}}{m-n},\]
	where $X_{2,j}$, $j=1,\dots,m,$ are independent copies of  $X_{2}(\Delta)$ defined by~\rm({\rm\ref{eq3}}) and
	\begin{align*}
	c_{4}(a,n,m)=\frac{(na/m-n)^{n/2}\Gamma(m/2)}{(1+na/m-n)^{m/2}\Gamma((m-n)/2)\Gamma(n/2)}.
	\end{align*}	
\end{theorem}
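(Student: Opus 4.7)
The plan is to identify the centred sojourn measure with an integral of a bounded nonlinear functional of the underlying Gaussian vector field $\bm{\eta}(x)=(\eta_{1}(x),\dots,\eta_{m}(x))'$, carry out its multi-parameter Hermite expansion, and then invoke the reduction principle Theorem~\ref{the3} together with the noncentral limit Theorem~\ref{the2} to identify the limit.

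More precisely, set $G(y):=\chi(F_{n,m-n}(y)>a)-(1-H(a))$ for $y\in\mathbb{R}^{m}$. Boundedness of $\chi$ places $G$ in $L_{2}(\mathbb{R}^{m},\phi(\|y\|)dy)$ and gives $M_{r}\{F_{n,m-n}\}-\vert\Delta\vert r^{d}(1-H(a))=\int_{\Delta(r)}G(\bm{\eta}(x))dx=:K_{r}$. To determine $H rank G$, note that the componentwise symmetry $y_{i}\mapsto-y_{i}$ preserves both $F_{n,m-n}$ and $G$, so every Hermite coefficient $C_{v}$ with an odd entry vanishes; in particular $H rank G\geq 2$. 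At rank two, independence of the $\eta_{j}$'s together with the same symmetry forces the mixed coefficients of $\eta_{i}\eta_{j}$ ($i\neq j$) to vanish, so only the ``pure'' coefficients $C_{i}^{(2)}:=\textbf{E}\bigl[G(\bm{\eta})H_{2}(\eta_{i})\bigr]$ survive. Conditioning on $\eta_{i}^{2}\sim\chi_{1}^{2}$ and on the two independent sums $\eta_{2}^{2}+\dots+\eta_{n}^{2}\sim\chi_{n-1}^{2}$ and $\eta_{n+1}^{2}+\dots+\eta_{m}^{2}\sim\chi_{m-n}^{2}$, the threshold $F_{n,m-n}>a$ becomes an explicit inequality among these three variables; evaluating the resulting chi-squared integrals together with the closed form of the Fisher--Snedecor density from Section~\ref{sec2} yields
\[
C_{i}^{(2)}=\frac{2c_{4}(a,n,m)}{n}\ \text{for}\ 1\leq i\leq n,\qquad C_{i}^{(2)}=-\frac{2c_{4}(a,n,m)}{m-n}\ \text{for}\ n+1\leq i\leq m,
\]
with the opposing signs reflecting the monotonicity of $F_{n,m-n}$ in its numerator versus denominator variables. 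These are nonzero, hence $H rank G=2$.

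Next I would invoke Theorem~\ref{the3}. With $\alpha_{j}=\alpha$, $L_{j}\equiv L$ and $\kappa=2$, the hypothesis $\alpha\in(0,d/2)$ gives $\sum_{j}\alpha_{j}k_{j,2}=2\alpha<d$, while $2\alpha\leq 3\alpha=(\kappa+1)\min_{j}\alpha_{j}$ is trivial. Consequently the limit distributions of $K_{r}/\sqrt{\textbf{Var}\,K_{r}}$ and $K_{r,2}/\sqrt{\textbf{Var}\,K_{r,2}}$ agree, and by~(\ref{77}) both standard deviations are asymptotically proportional to $c_{4}(a,n,m)\,r^{d-\alpha}L(r)$, so the same limit persists after rescaling $K_{r}$ by $c_{4}(a,n,m)\,r^{d-\alpha}L(r)$. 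Substituting the coefficients above, the leading piece reads
\[
K_{r,2}=\frac{c_{4}(a,n,m)}{n}\sum_{i=1}^{n}\int_{\Delta(r)}H_{2}(\eta_{i}(x))dx-\frac{c_{4}(a,n,m)}{m-n}\sum_{i=n+1}^{m}\int_{\Delta(r)}H_{2}(\eta_{i}(x))dx.
\]

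Finally, Theorem~\ref{the2} applied componentwise gives $r^{\alpha-d}L^{-1}(r)\int_{\Delta(r)}H_{2}(\eta_{j}(x))dx\Rightarrow X_{2,j}(\Delta)$, and the independence of $\eta_{1},\dots,\eta_{m}$ (driving independent isonormal white noises $W_{1},\dots,W_{m}$) upgrades this to joint convergence with independent limits $X_{2,1},\dots,X_{2,m}$. Substituting into the previous display and dividing by $c_{4}(a,n,m)\,r^{d-\alpha}L(r)$ produces exactly $n^{-1}\sum_{i=1}^{n}X_{2,i}-(m-n)^{-1}\sum_{i=n+1}^{m}X_{2,i}$, as claimed. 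The main obstacle is the explicit Hermite-coefficient computation in the second step: pinning down the precise scaling $c_{4}(a,n,m)$ (rather than merely establishing some positive proportionality constant) demands careful manipulation of the chi-squared and incomplete-beta integrals tied to the Fisher--Snedecor density, and it is not a priori evident that the numerator and denominator contributions pair up with the clean weights $1/n$ and $-1/(m-n)$.
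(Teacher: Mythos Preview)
Your approach is correct and is precisely the route taken in the literature. Note, however, that the present paper does not actually prove this theorem: it is quoted verbatim from \citealt{leonenko2014sojourn} as a known result to be used in Section~\ref{sec5}. That said, the paper does record all the ingredients of your argument in nearby proofs---the Hermite rank and the explicit coefficients $C_{v}=\pm 2c_{4}(a,n,m)/n$ (resp.\ $-2c_{4}(a,n,m)/(m-n)$) appear in the proof of Theorem~\ref{th44}, the reduction to $K_{r,2}$ is Theorem~\ref{the3}, and the componentwise non-central limit is Theorem~\ref{the2}---so your reconstruction matches exactly how the result is assembled both here and in the cited source.
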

	First, we demonstrate that for the case of components with different $B_{jj}(\cdot)$, $j=1,\dots , m$, there might be no convergence at all.

	\begin{theorem}\label{th44}
There is a vector random field $\bm{\eta}(x)$ satisfying Assumption~{\rm\ref{ass4}}, such that for any normalization of $M_{r}\left\{ F_{n,m-n}\right\}-\left| \Delta \right| r^{d}\left(1-I_{\frac{na}{m-n+na}}\left(\frac{n}{2},\frac{m-n}{2}\right)\right)$ 
the limit does not exist or is a degenerated random variable talking values $0$ or $\infty$, when $r\rightarrow \infty$.
\end{theorem}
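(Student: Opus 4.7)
The plan is to construct an explicit counterexample in which the two components of $\bm{\eta}$ share the same long-range exponent but are differentiated by an oscillating slowly varying factor. Take $m=2$, $n=1$ so $F_{1,1}(x)=\eta_{1}^{2}(x)/\eta_{2}^{2}(x)$, and let $\eta_{1},\eta_{2}$ be independent Gaussian fields satisfying Assumption~\ref{ass4} with $\alpha_{1}=\alpha_{2}=\alpha\in(0,d/2)$, $L_{1}\equiv 1$, and $L_{2}(r)=2+\sin(\log\log r)$ for large $r$ (modified on a compact set so that $B_{22}(r)=r^{-\alpha}L_{2}(r)$ is positive definite on $\mathbb{R}^{d}$; such a covariance is realised by prescribing a non-negative isotropic spectral density with the matching singularity at the origin in the sense of Assumption~\ref{ass2}). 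Then $L_{2}$ accumulates at $\ell_{-}=1$ and $\ell_{+}=3$ and does not converge as $r\to\infty$. The indicator $\chi(F_{1,1}>a)$ is even in each coordinate, so odd-order Hermite terms and the cross term $v=(1,1)$ vanish, yielding $H rank\,G=\kappa=2$ and
\[K_{r,2}=c_{1}X_{r,2,1}+c_{2}X_{r,2,2},\qquad c_{1}=C_{(2,0)}/2>0,\ c_{2}=C_{(0,2)}/2<0,\]
both nonzero by the strict monotonicity of $F_{1,1}$ in $\eta_{1}^{2}$ and $1/\eta_{2}^{2}$. The hypotheses $\sum_{j}\alpha_{j}k_{j,2}=2\alpha\le 3\alpha=(\kappa+1)\min_{j}\alpha_{j}$ and $2\alpha<d$ of Theorem~\ref{the3} hold, and the internal estimate $\mathbf{Var}(V_{r})=o(r^{2d-2\alpha})$ from its proof further shows that $V_{r}/d_{r}$ is negligible whenever $d_{r}^{2}$ is at least of order $r^{2d-2\alpha}$. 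Consequently the asymptotics of $M_{r}\{F_{1,1}\}-|\Delta|r^{d}(1-H(a))$ under any admissible normalization coincide with those of $K_{r,2}$.

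By Theorem~\ref{the2} and the independence of $\eta_{1},\eta_{2}$, the vector $(Y_{r,1},Y_{r,2}):=(X_{r,2,j}/(r^{d-\alpha}L_{j}(r)))_{j=1,2}$ converges jointly to $(X_{2,1}(\Delta),X_{2,2}(\Delta))$, a pair of independent Rosenblatt-type variables. Writing
\[\frac{K_{r,2}}{d_{r}}=M_{r}\bigl(c_{1}L_{1}(r)Y_{r,1}+c_{2}L_{2}(r)Y_{r,2}\bigr),\qquad M_{r}:=r^{d-\alpha}/d_{r},\]
I analyse three regimes. (A) If $M_{r}\to 0$, boundedness of the $L_{j}$ gives $\mathbf{Var}(K_{r,2}/d_{r})\to 0$ and the ratio converges to $0$ in $L^{2}$. (B) If $M_{r}\to\infty$, then along any sub-sequence with $L_{2}(r_{n})\to\ell\in[\ell_{-},\ell_{+}]$ the bracket converges to the absolutely continuous, a.s.\ non-zero variable $c_{1}X_{2,1}+c_{2}\ell X_{2,2}$, so multiplication by $M_{r_{n}}\to\infty$ forces $|K_{r,2}/d_{r}|\to\infty$ in probability. (C) If $M_{r}$ remains in a compact sub-interval of $(0,\infty)$, extract sub-sequences on which $L_{2}\to\ell_{-}$ and $L_{2}\to\ell_{+}$ while $M_{r}\to M_{-},M_{+}>0$ respectively. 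The sub-sequential limits $M_{\pm}(c_{1}X_{2,1}+c_{2}\ell_{\pm}X_{2,2})$ have variances $M_{\pm}^{2}(c_{1}^{2}+c_{2}^{2}\ell_{\pm}^{2})\mathbf{Var}(X_{2,j})$ and third cumulants $M_{\pm}^{3}(c_{1}^{3}+c_{2}^{3}\ell_{\pm}^{3})\kappa_{3}$ with $\kappa_{3}=\mathbf{E}X_{2,j}^{3}\ne 0$. Since $L_{2}$ attains a continuum of values in $[1,3]$, simultaneous agreement of variance and third cumulant across every sub-sequence would force $(c_{1}^{3}+c_{2}^{3}\ell^{3})/(c_{1}^{2}+c_{2}^{2}\ell^{2})^{3/2}$ to be constant in $\ell\in[1,3]$, which fails for $c_{1},c_{2}\ne 0$. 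Hence at least two sub-sequences yield distinct limiting laws, so no overall weak limit exists.

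Since these three regimes exhaust all positive normalizations $d_{r}$, the theorem follows. The principal technical hurdle is the very first step: realising the covariance $r^{-\alpha}L_{2}(r)$ as a genuine isotropic positive definite function on $\mathbb{R}^{d}$ despite the oscillation of $L_{2}$; this reduces to exhibiting a non-negative isotropic spectral density whose behaviour at the origin encodes the prescribed slowly varying factor via the Abelian/Tauberian correspondence behind Assumption~\ref{ass2}. The only other non-routine point is case (C), where separation of the sub-sequential laws relies on the non-symmetry (non-vanishing third cumulant) of the Rosenblatt distribution.
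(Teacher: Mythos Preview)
Your construction is essentially correct but takes a harder road than the paper. The paper's counterexample also sets all $\alpha_j=\alpha$, but chooses $L_1(r)=\exp\bigl((\log r)^{1/3}\cos(\log r)^{1/3}\bigr)$ and $L_j\equiv 1$ for $j\ge 2$; the decisive feature is that $\liminf_{r\to\infty} L_1(r)=0$ and $\limsup_{r\to\infty} L_1(r)=+\infty$. After reducing to $K_{r,2}$ one writes $K_{r,2}/r^{d-\alpha}=c\,L_1(r)U_1(r)+U_2(r)$ with $U_1(r)\to X_{2,1}$ and $U_2(r)$ convergent; the unbounded oscillation of $L_1$ then makes $L_1(r)U_1(r)$ divergent, so no deterministic rescaling can yield a nondegenerate limit, and the argument ends there. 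Your bounded choice $L_2\in[1,3]$ forces you to distinguish subsequential limits by comparing skewnesses of mixtures of independent Rosenblatt variables, which is valid (the Rosenblatt law has nonzero third cumulant) but is extra machinery the paper avoids entirely. Both approaches leave the existence of a field with the prescribed covariance (positive definiteness of $r^{-\alpha}L(r)$) at the level of Assumptions~\ref{ass4}--\ref{ass2}.

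Two points in your argument need tightening. First, your trichotomy (A)/(B)/(C) does not literally exhaust all positive $d_r$: for instance $M_r$ could have $\liminf M_r=0$ and finite positive $\limsup M_r$. The clean fix is to argue by contradiction: if $(K_{r,2}+V_r)/d_r$ had a nondegenerate weak limit, every subsequence would share it, which rules out subsequences with $M_{r_n}\to 0$ or $M_{r_n}\to\infty$ and hence forces $M_r$ into a compact subset of $(0,\infty)$, landing you in (C). Second, in case (B) your negligibility of $V_r/d_r$ is only justified when $d_r^2\gtrsim r^{2d-2\alpha}$, which is exactly what fails in (B). Use instead that $\mathrm{Var}\,V_r=o(r^{2d-2\alpha})$ gives $V_r/r^{d-\alpha}\to 0$ in $L^2$, so $(K_{r,2}+V_r)/r^{d-\alpha}$ and $K_{r,2}/r^{d-\alpha}$ have the same subsequential limits (a.s.\ nonzero), and then multiply by $M_r\to\infty$.
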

\begin{proof}
		Note that in this case $G(\omega)=\chi\left(\frac{\frac{1}{n}(\omega_{1}^2+\dots+\omega_{n}^2)}{\frac{1}{m-n}(\omega_{n+1}^2+\dots+\omega_{m}^2)} >a\right) $ and $C_{v}=0$ when $v\in \mathbb{N}_{1}$. Also, it was shown in~\citealt{leonenko2014sojourn} that $H rank G=2$ for $v\in N_{2}$ and
	\begin{displaymath}
	C_{v}=\left\{
	\begin{array}{lr}
	0,\quad \quad \quad\quad\quad \mbox{if} \quad \mbox{all} \quad k_{j}<2 \quad \mbox{for} \quad j\in \{1,\dots,m\},\\
	\frac{2c_{4}(a,n,m)}{n},\quad\quad\mbox{if}\quad  k_{j}=2\quad \mbox{for}\quad \mbox{some} \quad j \in \{ 1, \dots , n\} ,\\
	-\frac{2c_{4}(a,n,m)}{m-n},\quad\mbox{if}\quad k_{j}=2\quad \mbox{for} \quad \mbox{some} \quad j \in  \{n+1, \dots , m\} .
	\end{array}
	\right.
	\end{displaymath} 
Therefore, by Theorem~{\rm\ref{the3}} to study the asymptotic of $$M_{r}\left\{ F_{n,m-n}\right\}-\left| \Delta \right| r^{d}\left(1-I_{\frac{na}{m-n+na}}\left(\frac{n}{2},\frac{m-n}{2}\right)\right)$$ one has to study the behaviour of 
\begin{align}\label{eq9}
K_{r,2}(n,m)=2c_{4}(a,n,m)\bigg[\frac{1}{n}\sum_{j=1}^{n}\varsigma_{j}(r)-\frac{1}{m-n}\sum_{j=n+1}^{m}\varsigma_{j}(r)\bigg]
\end{align}
where $\varsigma_{j}(r)=\int_{\Delta(r)}H_{2}(\eta_{j}(x))dx=\int_{\Delta(r)}(\eta_{j}^{2}(x)-1)dx$.

Suppose that in Assumption~{\rm\ref{ass4}} all $\alpha_{j}$,  $j=1,\dots,m$, are equal, i.e. there exist $\alpha \in \left(0,\frac{d}{\kappa}  \right) $ such that $\alpha_{j}=\alpha$. Using Theorem~{\rm\ref{the2}} for $\kappa=2$ we obtain
\begin{align*}
\frac{1}{r^{d-\alpha}L_{j}(r)}\int_{\Delta(r)}H_{2}(\eta(x))dx\rightarrow  X_{2}\left(\Delta\right),\quad \quad r\rightarrow \infty,
\end{align*}
where
$X_{2}\left(\Delta\right)=c_{2}(d,\alpha)\int_{{\mathbb{R}}^{2d}}^{\prime}\mathcal{K}(\lambda_{1}+\lambda_{2})\frac{W(d\lambda_{1})W(d\lambda_{2})}{\Vert\lambda_{1}\Vert^{(d-\alpha)/2}\Vert\lambda_{2}\Vert^{(d-\alpha)/2}}.$

Let $L_{1}(r)=\exp{\left((\log{r})^{1/3}\cos\left(\log{r} \right)^{1/3}\right)}$ and $L_{j}(r)=1$, $j=2,\dots,m$. Note, that by~\citealt{bingham1989regular}, p.16, $L_{1}(\cdot)$ is a slowly varying function that satisfies
\begin{align}\label{eq10}
\liminf\limits_{r\rightarrow \infty} L_{1}(r)=0 \quad \mbox{and}\quad \limsup\limits_{r\rightarrow \infty} L_{1}(r)=+\infty.
\end{align}
Now,
\begin{align*}
\dfrac{K_{r,2}(n,m)}{r^{d-\alpha}}
&=2c_{4}(a,n,m)\bigg[\dfrac{L_{1}(r)}{n}\dfrac{\varsigma_{1}(r)}{r^{d-\alpha}L_{1}(r)}
+\frac{1}{r^{d-\alpha}}\bigg(\frac{1}{n}\sum_{j=2}^{n}\varsigma_{j}(r)\\
&-\frac{1}{m-n}\sum_{j=n+1}^{m}\varsigma_{j}(r)\bigg)\bigg]\\
&=:2c_{4}(a,n,m)\left(\dfrac{L_{1}(r)}{n}U_{1}(r)+U_{2}(r)\right).
\end{align*}
If $r \rightarrow \infty$ then $U_{1}(r)$ converges to $X_{2,1}(\Delta)$ and $U_{2}(r)$ converges to $\frac{1}{n}\sum_{j=2}^{n} X_{2,j}(\Delta)-\frac{1}{m-n}\sum_{j=n+1}^{m} X_{2,j}(\Delta)$, where $X_{2,j}(\Delta)$, $j=1,\dots,m$, are independent copies of $X_{2}(\Delta)$. However, by~({\rm\ref{eq10}}) the product $\dfrac{L_{1}(r)}{n}U_{1}(r)$ is not convergent. Hence, there is no a normalization for $K_{r,2}(n,m)$ that makes it convergent to a non-degenerated random variable.\qed
\end{proof}
Theorem~{\rm\ref{th44}} demonstrates that for differently distributed $\eta_{j}$ in Assumption~{\rm\ref{ass4}} the functional $K_{r}$ can be divergent when $r \rightarrow \infty$ regardless of what normalization was chosen.
\begin{definition}\label{d4}
	Let us call a component $\eta_{j}(x)$ of $\bm{\eta}(x)$ a dominant component if for all $i\neq j$ either  $\alpha_{i}=\alpha_{j}$ and there exists $\lim_{r\rightarrow\infty}\frac{L_{i}(r)}{L_{j}(r)}=a_{ij}<\infty$ or $\alpha_{i}<\alpha_{j}$.
\end{definition}
\begin{remark}
	By Definition~{\rm\ref{d4}} there can be several dominant components. In particular, in the case of $\alpha_{i}=\alpha$ and $L_{i}(\cdot)=L(\cdot)$, for all $i=1,\dots,m$, all components are dominant.
\end{remark}
Now we give an equivalent of Theorem 7 in~\citealt{leonenko2014sojourn} for the general case of components satisfying Assumption~{\rm\ref{ass4}}.
\begin{theorem}\label{thrm4}
	Let $\bm{\eta}(x)$, $x\in\mathbb{R}^{d}$, satisfy Assumptions~{\rm\ref{ass4}} and~{\rm\ref{ass2}} and $\max_{1\leq j \leq m}(\alpha_{j})\leq \frac{3}{2}\min_{1\leq j \leq m}(\alpha_{j})$. Let $\eta_{j_{1}^{*}}(x),\dots, \eta_{j_{N}^{*}}(x)$, $ j_{1}^{*},\dots,j_{N}^{*}\in \{1,\dots,m\}$, $j_{1}^{*}<\dots<j_{N}^{*}$, $N\leq m,$ be all dominant components of $\eta(x)$. Then
	\begin{align}\label{9}
	\frac{M_{r}\left\{ F_{n,m-n}\right\}-\left| \Delta \right| r^{d}\left(1-I_{\frac{na}{m-n+na}}\left(\frac{n}{2},\frac{m-n}{2}\right)\right)}{c_4(a,n,m)\,r^{d-\alpha_{j_{1}^{*}}}L_{j_{1}^{*}}(r)}
	\end{align} 
	converges weakly to the random variable 
	$\sum_{j\in \{j_{1}^{*},\dots,j_{N}^{*}\} } q_{j}X_{2,j}(\Delta)$, where 
	\begin{displaymath}
	q_{j}=\left\{
	\begin{array}{lr}
	\frac{a_{jj_{1}^{*}}}{n},\quad \quad \rm{if} \quad 1\leq j \leq n,\\
	-\frac{a_{jj_{1}^{*}}}{m-n},\quad\rm{if}\quad n+1\leq j \leq m .
	\end{array}
	\right.
	\end{displaymath} 
\end{theorem}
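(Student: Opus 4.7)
The strategy is to combine the reduction principle of Theorem~\ref{the3} with the explicit form of the Hermite-rank-$2$ projection $K_{r,2}$ already derived in the proof of Theorem~\ref{th44}, analysing each Gaussian summand by means of Theorem~\ref{the2}.

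First, I would verify the hypotheses of Theorem~\ref{the3} with $\kappa=2$. The proof of Theorem~\ref{th44} shows that the Hermite rank of $G(\omega)=\chi(F_{n,m-n}(\omega)>a)-(1-H(a))$ equals $2$ and that the only nonzero Hermite coefficients at level $2$ correspond to multi-indices $v\in N_2$ that place a single $k_{j,2}=2$. Hence $\sum_j\alpha_jk_{j,2}=2\alpha_j\le 2\max_j\alpha_j\le 3\min_j\alpha_j=(\kappa+1)\min_j\alpha_j$ by the standing hypothesis $\max_j\alpha_j\le\tfrac32\min_j\alpha_j$. The remaining condition $\sum_j\alpha_jk_{j,2}<d$ reduces to $2\alpha_j<d$, which is precisely what is required for the Rosenblatt-type integral $X_{2,j}(\Delta)$ of Theorem~\ref{the2} to be well defined. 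Theorem~\ref{the3} then transfers the asymptotic distribution of $K_r:=M_r\{F_{n,m-n}\}-|\Delta|r^{d}(1-I_{na/(m-n+na)}(n/2,(m-n)/2))$ onto $K_{r,2}$, whose explicit form~(\ref{eq9}) identifies it as a finite linear combination of the independent random variables $\varsigma_j(r)=\int_{\Delta(r)}H_2(\eta_j(x))\,dx$.

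Next, Theorem~\ref{the2} with $\kappa=2$ yields for each $j$
\[
\frac{\varsigma_j(r)}{r^{d-\alpha_j}L_j(r)}\Rightarrow X_{2,j}(\Delta),\qquad r\to\infty,
\]
and, since Assumption~\ref{ass4} forces the $\eta_j$ (hence the $\varsigma_j$) to be independent, the joint vector converges to $(X_{2,j}(\Delta))_{j=1}^m$ with independent components. I would then rewrite
\[
\frac{\varsigma_j(r)}{r^{d-\alpha_{j_1^*}}L_{j_1^*}(r)}=\Bigl(r^{\alpha_{j_1^*}-\alpha_j}\,\tfrac{L_j(r)}{L_{j_1^*}(r)}\Bigr)\cdot\frac{\varsigma_j(r)}{r^{d-\alpha_j}L_j(r)}
\]
and evaluate the deterministic prefactor separately for the two regimes. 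For a dominant index $j\in\{j_1^*,\dots,j_N^*\}$, Definition~\ref{d4} yields $\alpha_j=\alpha_{j_1^*}$ together with $L_j/L_{j_1^*}\to a_{jj_1^*}$, so the prefactor tends to $a_{jj_1^*}$. For a non-dominant index $j$ there is a strict power gap between $\alpha_j$ and $\alpha_{j_1^*}$ in the direction that makes the polynomial factor decay; combined with Potter's bound (Theorem~1.5.6 in \citealt{bingham1989regular}) on the slowly varying ratio $L_j/L_{j_1^*}$, this gap forces the prefactor to vanish as $r\to\infty$. Slutsky's theorem applied to the resulting linear combination then gives $K_{r,2}/(c_4(a,n,m)\,r^{d-\alpha_{j_1^*}}L_{j_1^*}(r))\Rightarrow\sum_{j\in\{j_1^*,\dots,j_N^*\}}q_j\,X_{2,j}(\Delta)$.

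Finally, I would transport this limit from $K_{r,2}$ back to $K_r$ under the same normalization. Formula~(\ref{77}) shows that only the dominant indices contribute to the leading order of $\mathbf{Var}(K_{r,2})$, which is therefore asymptotic to $C^{2}\,r^{2d-2\alpha_{j_1^*}}L_{j_1^*}^{2}(r)$ for a positive constant $C$. Consequently the prescribed normalization is asymptotically proportional to $\sqrt{\mathbf{Var}(K_{r,2})}$; since the proof of Theorem~\ref{the3} also delivers $\mathbf{Var}(K_r)/\mathbf{Var}(K_{r,2})\to 1$, one further application of Slutsky's theorem produces the same weak limit for $K_r$ normalized by $c_4(a,n,m)\,r^{d-\alpha_{j_1^*}}L_{j_1^*}(r)$, which is precisely~(\ref{9}). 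The main obstacle is the uniform control of the non-dominant contributions against irregular fluctuations of slowly varying functions like those exhibited in the counterexample of Theorem~\ref{th44}; this is handled exactly by the existence of $\lim_{r\to\infty}L_j(r)/L_{j_1^*}(r)$ enforced by Definition~\ref{d4} for every component with $\alpha_j=\alpha_{j_1^*}$, together with Potter's bound for indices with a genuine power gap.
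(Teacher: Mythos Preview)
Your proposal is correct and follows essentially the same route as the paper's proof: invoke Theorem~\ref{the3} to reduce to $K_{r,2}$, use the explicit form~(\ref{eq9}), apply Theorem~\ref{the2} componentwise, and analyse the deterministic prefactors $r^{\alpha_{j_1^*}-\alpha_j}L_j(r)/L_{j_1^*}(r)$ according to whether the index is dominant. You are in fact more explicit than the paper in checking the hypothesis $\sum_j\alpha_j k_{j,2}\le 3\min_j\alpha_j$ of Theorem~\ref{the3} and in justifying, via~(\ref{77}) and $\mathbf{Var}(K_r)/\mathbf{Var}(K_{r,2})\to 1$, why the specific normalization $r^{d-\alpha_{j_1^*}}L_{j_1^*}(r)$ (rather than $\sqrt{\mathbf{Var}\,K_{r,2}}$) transfers the limit from $K_{r,2}$ to $K_r$; one small refinement is that a non-dominant index need not have a strict power gap---it may satisfy $\alpha_j=\alpha_{j_1^*}$ with $a_{jj_1^*}=0$---but your closing remark about Definition~\ref{d4} guaranteeing $\lim L_j/L_{j_1^*}$ already covers this case.
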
 
\begin{proof} First, let us note that if $\bm{\eta}(x)$ has two dominant components $\eta_{j_{1}^{*}}$ and $\eta_{j_{2}^{*}}$, $j_{1}^{*}\neq j_{2}^{*}$, then $\alpha_{j_{1}^{*}}=\alpha_{j_{2}^{*}}$ and $\lim_{r\rightarrow\infty}\frac{L_{j_{1}^{*}}(r)}{L_{j_{2}^{*}}(r)}=a_{j_{1}^{*}j_{2}^{*}}\in (0,\infty)$.

By Theorem~{\rm\ref{the3}}, the random variable~({\rm\ref{9}})  and $\dfrac{K_{r,2}}{c_4(a,n,m)\,r^{d-\alpha_{j_{1}^{*}}}L_{j_{1}^{*}}(r)}$ have the same asymptotic behaviour.\

By~({\rm\ref{eq9}}), it follows that   
\begin{align*}
\dfrac{K_{r,2}(n,m)}{c_4(a,n,m)\,r^{d-\alpha_{j_{1}^{*}}}L_{j_{1}^{*}}(r)}= 2\sum_{j=1}^{m}q_{j} \dfrac{r^{d-\alpha_{j}}}{r^{d-\alpha_{j_{1}^{*}}}}\dfrac{L_{j}(r)}{L_{j_{1}^{*}}(r)}\dfrac{1}{r^{d-\alpha_{j}}L_{j}(r)}\int_{\Delta(r)}\left(\eta_{j}^{2}(x)-1 \right) dx.
\end{align*}
Note that $\frac{r^{d-\alpha_{j}}}{r^{d-\alpha_{j_{1}^{*}}}}\frac{L_{j}(r)}{L_{j_{1}^{*}}(r)}$ converges to $a_{jj_{1}^{*}}$ if the $j^{th}$ component is dominant and converges to 0 otherwise. As $\frac{1}{r^{d-\alpha_{j}}L_{j}(r)}\int_{\Delta(r)}\left(\eta_{j}^{2}(x)-1 \right) dx$ converges to $X_{2,j}(\Delta)$, when $r\rightarrow\infty$, we obtain the statement of the theorem.\qed
\end{proof}	
\section{Numerical results}\label{sec6}
In this section we present numerical studies to investigate the asymptotic behaviour of Minkowski functionals of vector random fields. We consider cases where components of $\bm{\eta}(x)$ possess same or different long-range dependent behaviours. The simulation studies confirm the obtained theoretical results and suggest some new problems.  

The following models of $\bm{\eta} (x)=[\eta _{1}(x),\eta _{2}(x) ,\eta _{3}(x)]^{\prime }$, $x\in \mathbb{R}^{2},$ were used for simulations :
\begin{itemize}
\item[(a)] Cauchy model with components having the same long-range dependent behaviour;
\item[(b)] Cauchy model in which the components have different long-range dependent behaviours; and
\item[(c)] Bessel model with components having the same cyclic long-memory behaviour. 
\end{itemize}
The Cauchy covariance function is 
\[B(\|x\|)=  (1+\|x\|^{2})^{-\frac{\alpha}{2}},\quad 0 < \alpha < 1,\] while the Bessel one is \[B(\|x\|)=2^{\nu}\Gamma(\nu+1)\frac{J_{\nu}(\|x\|)}{\|x\|^{\nu}},\quad 0\leq\nu<\frac{1}{2}.\]
For the given range of parameters, both covariance functions are non-integrable, i.e. $\int_{{\mathbb{R}}^{2}} \rvert B(\|x\|)\lvert dx=\infty$. Hence, the long-range dependent case is investigated.
Note, that if for all components $\eta_{i}(x)$ in model (b) $\alpha_{i}=\alpha$, then model (b) coincides with model (a).

The R software package RandomFields (see~\citealt{Schlather}), was used to simulate $\eta_{i}(x)$, $x\in \mathbb{R}^{2}$, $i=1,2,3,$ from the above models.

To investigate limit behaviours the following procedure was used. For 1000 simulated realizations of Fisher–-Snedecor random fields $F_{1,2}(x)=\dfrac{2 \eta_{1}^{2}(x)}{\eta_{2}^{2}(x)+\eta_{3}^{2}(x)}$, $x \in \mathbb{R}^2$, areas of excursion sets were computed for each realisation.
The excursion sets above the level $a=1$ are shown in black colour in Figures~{\rm\ref{fig:1a}} and~{\rm\ref{fig:5a}}.
To compared empirical distributions of the excursion areas with the normal law the normal Q-Q plots are presented in Figures~{\rm\ref{fig:1a}} and~{\rm\ref{fig:5b}}. 

Empirical distributions of sojourn measures for the above models were investigated in the following cases.

\textbf{Case 1.} The values $\alpha_{1}=0.65$, $\alpha_{2}=0.8$ and $\alpha_{3}=0.9$ were used. These values satisfy the conditions of Theorem~{\rm\ref{thrm4}}. First, realizations of Fisher–-Snedecor random fields $F_{1,2}(x)$, $x \in \mathbb{R}^2$, were simulated using $\eta_{i}(x)$, $i=1,2,3$, with $\alpha_{i}$ given above. Then, another set of realisations of $F_{1,2}(x)$ was simulated using the same $\alpha$ $(\alpha_{1}, \alpha_{2}$ or $\alpha_{3})$ for all $\eta_{i}(x)$. Q-Q plots of the realizations with different $\alpha_{i}$ versus the realisations with the same $\alpha$ were produced for each $\alpha=\alpha_{i}$.
These Q--Q plots in Figure~{\rm\ref{fig:case1}} indicate that the distributions are close only in the case of $\alpha_{1}=0.65$ (Figure~3a) which corresponds to the dominant component $\eta_{1}(x)$. In two other cases the Q--Q plots may suggest that the distributions are similar, but their variances are different. 

 \textbf{Case 2.} The values $\alpha_{1}=0.1$, $\alpha_{2}=0.5$ and $\alpha_{3}=0.9$ were used. These values do not satisfy the conditions of Theorem~{\rm\ref{thrm4}}. Q-Q plots analogues to Case 1 were obtained and shown in Figure~{\rm\ref{fig:case2}}. The asymptotic distributions are different in all cases. Hence, the reduction principle may not work in this case.
  \begin{figure}[H]
 	\centering
 	\includegraphics[width=1\linewidth]{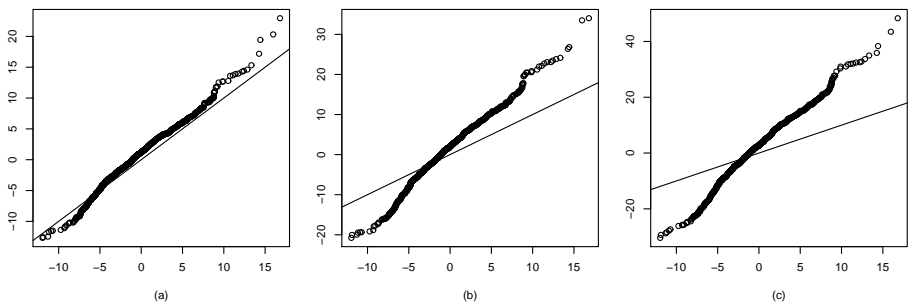} \vspace{-1cm} 
 	\caption{Q--Q plots of model (b) versus three models (a) for case 1. }
 	\label{fig:case1}
 	 \end{figure}
  
 \textbf{Case 3.} Three independent copies of the Cauchy random field with $\alpha=0.5$ were used for models (a) and three independent Bessel random fields with $\nu=0$ for model (c). The corresponding Cauchy and Bessel covariance functions have the same asymptotic hyperbolic decay rate $\|x\|^{-0.5}$, which determines their long-range dependent behaviours. In addition, the Bessel covariance exhibits decaying oscillations as $J_{0}(\|x\|)\sim \sqrt{\frac{2}{\pi \|x\|}}cos\left( \|x\|+\frac{\pi}{4}\right) $, when $\|x\|\rightarrow \infty$. It is due to the cyclic properties of model (c).
In this case the asymptotic distributions of $M_{r}\{F_{1,2}(x)\}$ for models (a) and (c) are different as shown in Figure~{\rm\ref{fig:5c}}. Figure~{\rm\ref{fig:5b}} suggests that the asymptotic normality may be appropriate. That is, new limit theorems are required for  cyclic long-range dependent vector random fields.
    \begin{figure}[H]
    	\vspace{-0.4cm}
	\centering
	\includegraphics[width=1\linewidth]{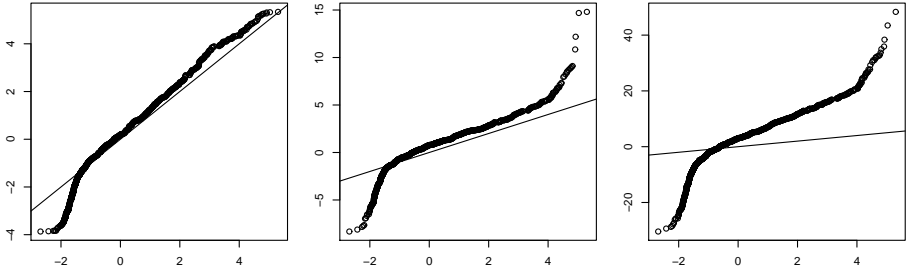} \vspace{-1.2cm}
	\caption{Q–-Q plots of model (b) versus three models (a) for case 2.}\label{fig:case2}
\end{figure}
\vspace{-1.4cm}
 \begin{figure}[H]
	\centering
	\begin{subfigure}[b]{0.35\textwidth}
		\includegraphics[width=.97\textwidth,trim={3cm 0 0 0},clip]{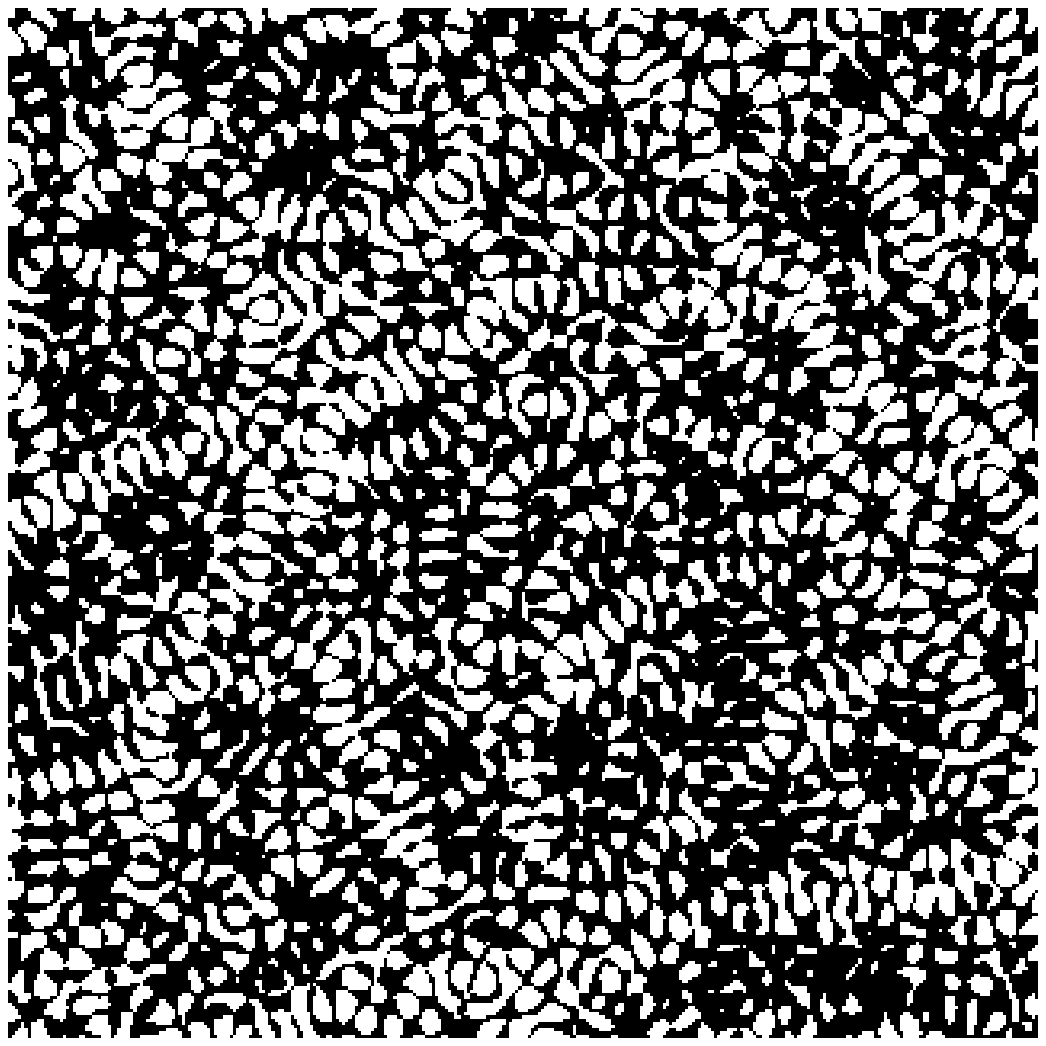}  \vspace{0.05cm}
		\caption{}
		\label{fig:5a}
	\end{subfigure}
	\hfill  \hspace{-3.1cm}
	\begin{subfigure}[b]{0.37\textwidth}
		\includegraphics[width=.97\textwidth, height=4.75cm,trim={1cm 0 0 0},clip]{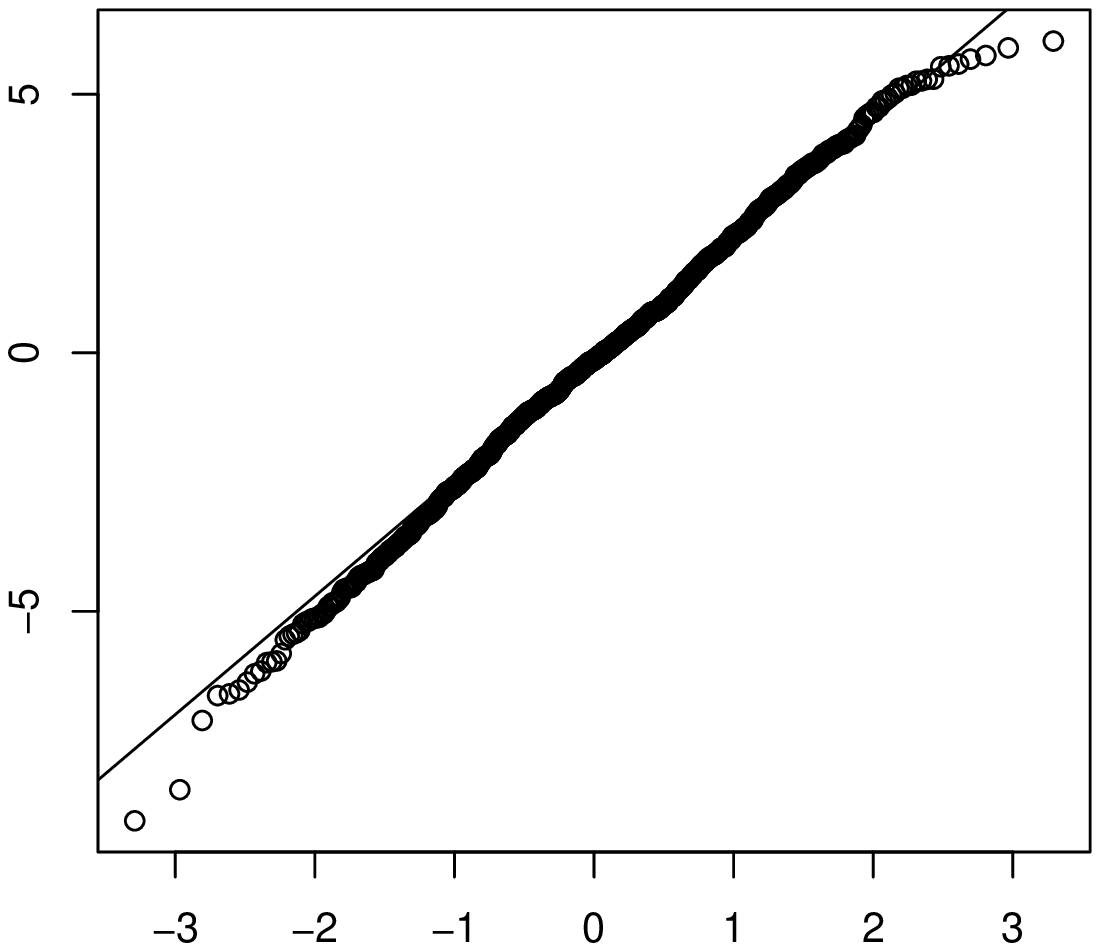} \vspace{-.7cm}
		\caption{}
		\label{fig:5b}
	\end{subfigure}
	 \hfill  \hspace{-2.9cm}
	\begin{subfigure}[b]{0.37\textwidth}
		\includegraphics[height=4.85cm]{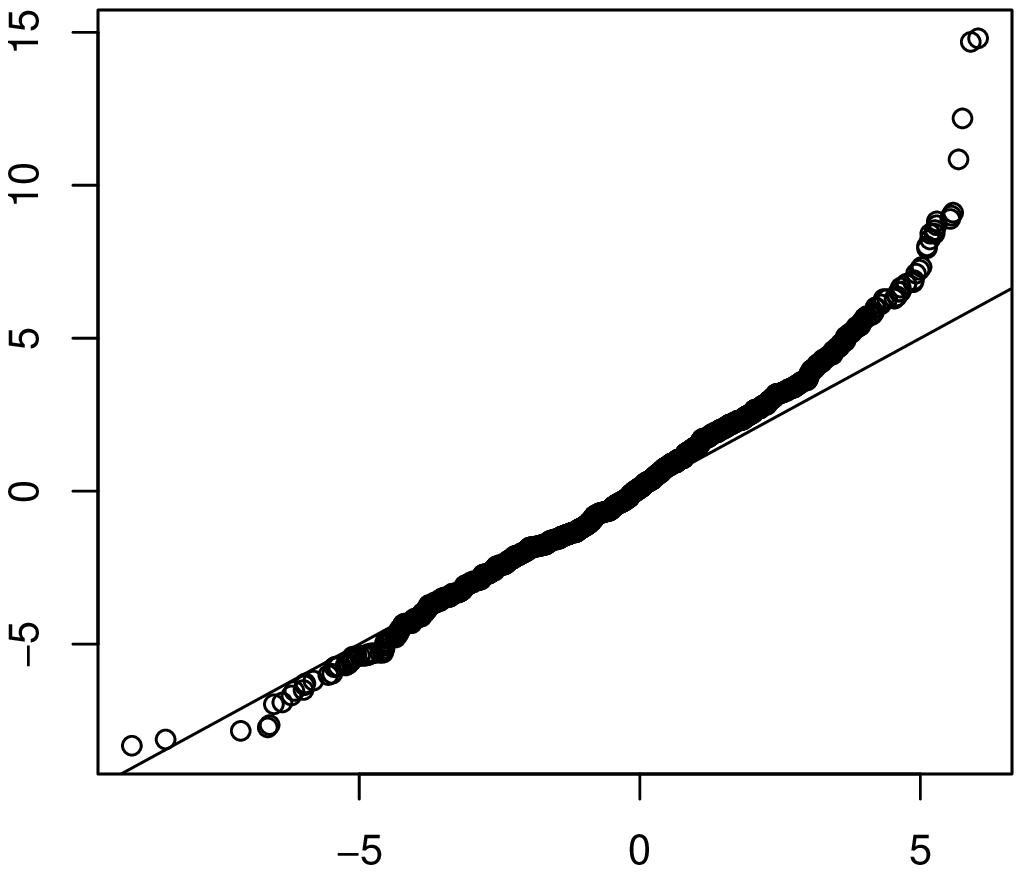} \vspace{-1.1cm}
		\caption{}
		\label{fig:5c}
	\end{subfigure}
	\caption{Excursion set, Q--Q plots for model (c) and for  model (a) versus model (c).}\label{fig:case3}\vspace{-0.2cm}
\end{figure}
 	\section{Conclusions}\label{sec7}
	In this paper, we studied vector random fields with strongly dependent components. The main result is the reduction principle for the case when different components can have different long-range behaviours. 
	It was also investigated how the dominant terms at $H rank G$ level determine asymptotic distributions. We used these results to study the asymptotic behaviour of the first Minkowski functional for the  Fisher-–Snedecor random fields.
	
	In our future research we plan to extend these results to other random fields, in particular, to investigate the vector case with weak and strong dependent components and the case of cyclic/seasonal long-range dependent components.\\
	
\noindent\textbf{Acknowledgements} Andriy Olenko was partially supported under the Australian Research Council's Discovery Projects funding scheme (project DP160101366) and the La Trobe University DRP Grant in Mathematical and Computing Sciences.
	

\end{document}